\documentclass[notitlepage, 12pt, reqno]{amsart}

\pdfoutput=1

%%%%%%%%%%%%%%%%%%%%%%%%%%%%%%%%%%%%%%%%%%%%%%
%%%%%%%%%%%%%%%%%% PACKAGES %%%%%%%%%%%%%%%%%%
%%%%%%%%%%%%%%%%%%%%%%%%%%%%%%%%%%%%%%%%%%%%%%

% add notitlepage to kwargs to get rid of space after abstract
\usepackage{graphicx}
\usepackage{amssymb}
\usepackage[foot]{amsaddr}
\usepackage{hyperref}
\usepackage{amsmath}
\usepackage{enumitem}
\usepackage{cite}
\usepackage{mathtools}
\usepackage{verbatim}
\usepackage{subcaption}
% \usepackage[activate={true,nocompatibility},final,tracking=true,kerning=true,spacing=true,factor=1100,stretch=10,shrink=10]{microtype} % http://www.khirevich.com/latex/microtype/
% \microtypecontext{spacing=nonfrench}

\usepackage{amsthm}

\usepackage{comment}
\usepackage{setspace}

\usepackage{tikz}
\usetikzlibrary{arrows.meta} % fancy arrows in plots
\usetikzlibrary{decorations.pathmorphing} % generate "random-looking" paths between two points

\usepackage{pgfplots}
\pgfplotsset{compat=newest}

\usepackage{color}
\usepackage[letterpaper,margin=1in]{geometry}

% Packages for conveniently labeling conditions in a theorem environment etc.
\usepackage{enumitem} % for '\newlist' and '\setlist' macros
\newlist{condenum}{enumerate}{1} % 'condenum': a new, enumerate-like list env.
\setlist[condenum]{label=\bfseries Condition \arabic*.,
                   ref=\arabic*, wide}

%%%%%%%%%%%%%%%%%%%%%%%%%%%%%%%%%%%%%%%%%%%%%%
%%%%%%%%%%%%%%% END PACKAGES %%%%%%%%%%%%%%%%%
%%%%%%%%%%%%%%%%%%%%%%%%%%%%%%%%%%%%%%%%%%%%%%

%%%%%%%%%%%%%%%%%%%%%%%%%%%%%%%%%%%%%%%%%%%%%%
%%%%%%%% THEOREM/LEMMA/SECTION STYLES %%%%%%%%
%%%%%%%%%%%%%%%%%%%%%%%%%%%%%%%%%%%%%%%%%%%%%%

\numberwithin{equation}{section}

\newtheorem{theorem}{Theorem}[section]

\theoremstyle{plain}

\newtheorem{lemma}[theorem]{Lemma}
\newtheorem{proposition}[theorem]{Proposition}

\theoremstyle{definition}

\newtheorem{conjecture}[theorem]{Conjecture}

 % makes section numbers arabic

%%%%%%%%%%%%%%%%%%%%%%%%%%%%%%%%%%%%%%%%%%%%%%
%%%%%% END THEOREM/LEMMA/SECTION STYLES %%%%%%
%%%%%%%%%%%%%%%%%%%%%%%%%%%%%%%%%%%%%%%%%%%%%%

%%%%%%%%%%%%%%%%%%%%%%%%%%%%%%%%%%%%%%%%%%%%%%
%%%%%%%%%%%%%%%%%% COMMANDS %%%%%%%%%%%%%%%%%%
%%%%%%%%%%%%%%%%%%%%%%%%%%%%%%%%%%%%%%%%%%%%%%

%%% Parts Notation %%%
\newcommand{\parts}[1]{\mathfrak{#1}} % parts
\newcommand{\size}[1]{\lVert \parts{#1} \rVert} % sum of parts
\newcommand{\length}[1]{| \parts{#1} |} % number of parts
\newcommand{\lengthn}[2]{| \parts{#1}_{#2} |} % number of parts

%%% Mathematical objects %%%

 % sets
 % matrices
 % vectors
 % generating functions

%%% Other notation %%%

 % label on general metric, i.e. | |_{\metric}
 % label on another metric, used in example section
\newcommand{\defn}{\coloneqq} % definition (note that \coloneqq aligns the := correctly)
  % such that
 % indicator function
 % transpose
 % identity matrix
 % zero matrix
 % (1, 1, 1 ...)
 % (0, 0, 0 ...)

% Default \left and \right leave way too much space.
% https://tex.stackexchange.com/questions/2607/spacing-around-left-and-right
\let\oldleft\left
\let\oldright\right
\renewcommand{\left}{\mathopen{}\mathclose\bgroup\oldleft}
\renewcommand{\right}{\aftergroup\egroup\oldright}

%%% Stirling Numbers %%%

%%% Misc %%%

%%%%%%%%%%%%%%%%%%%%%%%%%%%%%%%%%%%%%%%%%%%%%%%%%%
%%%%%%%%%%%%%%%%%% END COMMANDS %%%%%%%%%%%%%%%%%%
%%%%%%%%%%%%%%%%%%%%%%%%%%%%%%%%%%%%%%%%%%%%%%%%%%

\begin{document}

\title{On the moments of the Ulam-Kac adder}

\author{Gage Bonner}
\email{gbonner@wisc.edu}

\address{Department of Mathematics, University of Wisconsin --
	Madison, WI 53706, USA}

% \affiliation{Department of Mathematics, University of Wisconsin-Madison, Madison, WI 53706, USA}

\date{\today}

\begin{abstract}
Let $\{U(n)\}_{n \geq 0}$ be a sequence of independent random variables such that $U(n)$ is distributed uniformly on $\{0, 1, 2 \dots n\}$. The Ulam-Kac adder is the history-dependent random sequence defined by $X_{n + 1} = X_{n} + X_{U(n)}$ with the initial condition $X_0 = 1$. We show that for each $m \geq 1$, it holds that $\log E[X_n^m]/\sqrt{n}$ approaches a constant $c_m$ as $n \to \infty$. Loose bounds are provided for the constants $c_m$.
\end{abstract}

%\keywords{Ulam}

\maketitle

% \tableofcontents

\section{Introduction}
\label{sec:introduction}

History-dependent random sequences are sequences in which the distribution of the outcome of the $n^\text{th}$ step depends non-trivially on the entire history of the process. Such sequences have numerous applications in sciences such as in polymer physics where basic models of confined polymer chains in small enclosures consist of lattice sites which are occupied sequentially. The points evolve according to a self-avoiding random walk \cite{domb1969, freed1981polymers, macdonald2000self} and the investigation of the analytical properties of such processes remains an active area of research \cite{benito2018confined, parreno2020self}. 

The focus of this paper is the so-called Ulam-Kac adder, following the ``Ulam-Kac process'' defined in \cite{clifford2008history}. Let $\{U(n)\}_{n \geq 0}$ be a sequence of independent random variables such that $U(n)$ is distributed uniformly on $\{0, 1, 2 \dots n\}$. The Ulam-Kac adder is the history-dependent random sequence defined by
\begin{equation} \label{eq:kac-adder-def}
    X_{n + 1} = X_{n} + X_{U(n)}; \quad X_0 = 1.
\end{equation}
This sequence was first investigated by Ulam \cite{ulam1990analogies}. The path space for the first five steps of this sequence is shown in Figure~\ref{fig:kac_paths}.
%%%%%%%%%%%%%%%%%%%%%%%%%%%%%%%%%%%%%%%%%%%%%%%%%%
\begin{figure}[ht]
\centering
\begin{tikzpicture}[node distance=2cm]
% level 1
\node[] at (0mm,0mm) {$1$};

\draw[-{Latex[length=2mm, width=2mm]}, black, thick] (0mm, -2mm) to (0mm, -8mm);
% level 2
\node[] at (0mm,-10mm) {$2$};

\draw[-{Latex[length=2mm, width=2mm]}, black, thick] (0mm,-12mm) to (-40mm,-18mm);
\draw[-{Latex[length=2mm, width=2mm]}, black, thick] (0mm,-12mm) to (40mm,-18mm);
% level 3
\node[] at (-40mm,-20mm) {$3$};
\node[] at (40mm,-20mm) {$4$};

\draw[-{Latex[length=2mm, width=2mm]}, black, thick] (-40mm,-22mm) to (-60mm,-28mm);
\draw[-{Latex[length=2mm, width=2mm]}, black, thick] (-40mm,-22mm) to (-40mm,-28mm);
\draw[-{Latex[length=2mm, width=2mm]}, black, thick] (-40mm,-22mm) to (-20mm,-28mm);
\draw[-{Latex[length=2mm, width=2mm]}, black, thick] (40mm,-22mm) to (60mm,-28mm);
\draw[-{Latex[length=2mm, width=2mm]}, black, thick] (40mm,-22mm) to (40mm,-28mm);
\draw[-{Latex[length=2mm, width=2mm]}, black, thick] (40mm,-22mm) to (20mm,-28mm);
% level 4
\node[] at (-60mm,-30mm) {$4$};
\node[] at (-40mm,-30mm) {$5$};
\node[] at (-20mm,-30mm) {$6$};
\node[] at (60mm,-30mm) {$8$};
\node[] at (40mm,-30mm) {$6$};
\node[] at (20mm,-30mm) {$5$};

\draw[-{Latex[length=2mm, width=2mm]}, black, thick] (-60mm,-32mm) to (-66mm,-38mm);
\draw[-{Latex[length=2mm, width=2mm]}, black, thick] (-60mm,-32mm) to (-62mm,-38mm);
\draw[-{Latex[length=2mm, width=2mm]}, black, thick] (-60mm,-32mm) to (-58mm,-38mm);
\draw[-{Latex[length=2mm, width=2mm]}, black, thick] (-60mm,-32mm) to (-54mm,-38mm);

\draw[-{Latex[length=2mm, width=2mm]}, black, thick] (-40mm,-32mm) to (-46mm,-38mm);
\draw[-{Latex[length=2mm, width=2mm]}, black, thick] (-40mm,-32mm) to (-42mm,-38mm);
\draw[-{Latex[length=2mm, width=2mm]}, black, thick] (-40mm,-32mm) to (-38mm,-38mm);
\draw[-{Latex[length=2mm, width=2mm]}, black, thick] (-40mm,-32mm) to (-34mm,-38mm);

\draw[-{Latex[length=2mm, width=2mm]}, black, thick] (-20mm,-32mm) to (-26mm,-38mm);
\draw[-{Latex[length=2mm, width=2mm]}, black, thick] (-20mm,-32mm) to (-22mm,-38mm);
\draw[-{Latex[length=2mm, width=2mm]}, black, thick] (-20mm,-32mm) to (-18mm,-38mm);
\draw[-{Latex[length=2mm, width=2mm]}, black, thick] (-20mm,-32mm) to (-14mm,-38mm);

\draw[-{Latex[length=2mm, width=2mm]}, black, thick] (60mm,-32mm) to (66mm,-38mm);
\draw[-{Latex[length=2mm, width=2mm]}, black, thick] (60mm,-32mm) to (62mm,-38mm);
\draw[-{Latex[length=2mm, width=2mm]}, black, thick] (60mm,-32mm) to (58mm,-38mm);
\draw[-{Latex[length=2mm, width=2mm]}, black, thick] (60mm,-32mm) to (54mm,-38mm);

\draw[-{Latex[length=2mm, width=2mm]}, black, thick] (40mm,-32mm) to (46mm,-38mm);
\draw[-{Latex[length=2mm, width=2mm]}, black, thick] (40mm,-32mm) to (42mm,-38mm);
\draw[-{Latex[length=2mm, width=2mm]}, black, thick] (40mm,-32mm) to (38mm,-38mm);
\draw[-{Latex[length=2mm, width=2mm]}, black, thick] (40mm,-32mm) to (34mm,-38mm);

\draw[-{Latex[length=2mm, width=2mm]}, black, thick] (20mm,-32mm) to (26mm,-38mm);
\draw[-{Latex[length=2mm, width=2mm]}, black, thick] (20mm,-32mm) to (22mm,-38mm);
\draw[-{Latex[length=2mm, width=2mm]}, black, thick] (20mm,-32mm) to (18mm,-38mm);
\draw[-{Latex[length=2mm, width=2mm]}, black, thick] (20mm,-32mm) to (14mm,-38mm);
% level 5
\node[] at (-66mm,-40mm) {$5$};
\node[] at (-62mm,-40mm) {$6$};
\node[] at (-58mm,-40mm) {$7$};
\node[] at (-54mm,-40mm) {$8$};

\node[] at (-46mm,-40mm) {$6$};
\node[] at (-42mm,-40mm) {$7$};
\node[] at (-38mm,-40mm) {$8$};
\node[] at (-34mm,-40mm) {$10$};

\node[] at (-26mm,-40mm) {$7$};
\node[] at (-22mm,-40mm) {$8$};
\node[] at (-18mm,-40mm) {$9$};
\node[] at (-14mm,-40mm) {$12$};

\node[] at (66mm,-40mm) {$16$};
\node[] at (62mm,-40mm) {$12$};
\node[] at (58mm,-40mm) {$10$};
\node[] at (54mm,-40mm) {$9$};

\node[] at (46mm,-40mm) {$12$};
\node[] at (42mm,-40mm) {$10$};
\node[] at (38mm,-40mm) {$8$};
\node[] at (34mm,-40mm) {$7$};

\node[] at (26mm,-40mm) {$10$};
\node[] at (22mm,-40mm) {$9$};
\node[] at (18mm,-40mm) {$7$};
\node[] at (14mm,-40mm) {$6$};

\end{tikzpicture}
\caption{Illustration of the paths through the Ulam-Kac adder for the first five steps. Each of the paths are realized with equal probability.}
\label{fig:kac_paths}
\end{figure}
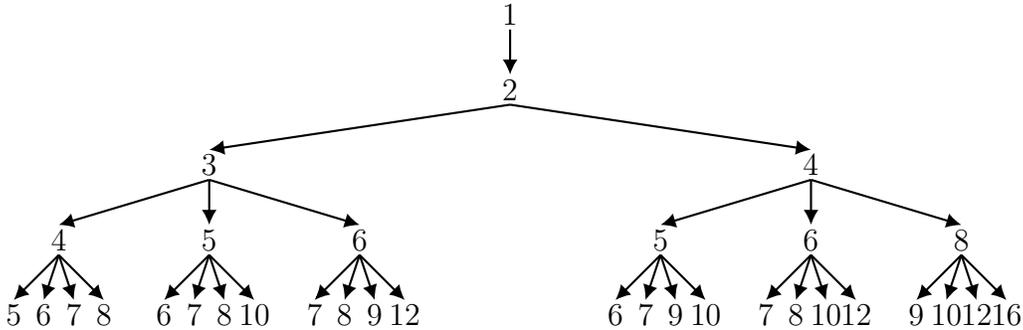
%%%%%%%%%%%%%%%%%%%%%%%%%%%%%%%%%%%%%%%%%%%%%%%%%%
The first explicit results on the behavior of the sequence were obtained by Kac \cite{kac1989} where it was shown using generating functions and the method of steepest descent that 
\begin{align}
    \log E[X_n] &\sim 2 \sqrt{n}, \label{eq:mu1-kac}\\ 
    \log E[X_n^2] &\sim \sqrt{2 n \left(5 + \sqrt{17}\right)}. \label{eq:mu2-kac}
\end{align}
where we write $f(n) \sim g(n)$ if $\lim_{n \to \infty} f(n)/g(n) = 1$. In \cite{krasikov2004growing}, Eq.~\eqref{eq:kac-adder-def} is generalized to 
\begin{equation} \label{eq:kac-adder-def-gamma}
    X_{n + 1} = X_{n} + \gamma X_{U(n)}; \quad X_0 = 1,
\end{equation}
and the analogues of Eqs.~\eqref{eq:mu1-kac} and \eqref{eq:mu2-kac} are obtained. The Ulam-Kac adder was also considered by \cite{bennaim2002growth} in the context of extensions of random Fibonacci sequences. There, the following results are conjectured based on numerical evidence:
\begin{conjecture} \label{conj:sqrt}
There is an increasing sequence of positive constants $(c_m)_{m \geq 1}$ such that
\begin{equation}
\log E[X_n^m] \sim c_m \sqrt{n} \quad \forall m \geq 1.
\end{equation}
\end{conjecture}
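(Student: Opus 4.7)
The plan is to prove existence of $c_m$ by extending the generating function method Kac used for $m = 1$, after establishing preliminary bounds on $\phi_m(n) := \log E[X_n^m]$; the monotonicity $c_m < c_{m+1}$ then follows directly from Lyapunov's inequality.

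The starting point is the master recurrence, obtained by conditioning on $U(n)$, which is independent of $X_0, \ldots, X_n$:
\[
M_m(n+1) = \frac{1}{n+1} \sum_{j=0}^{n} E[(X_n + X_j)^m] = \frac{1}{n+1} \sum_{j=0}^{n} \sum_{k=0}^{m} \binom{m}{k} E\bigl[X_n^k X_j^{m-k}\bigr],
\]
where $M_m(n) := E[X_n^m]$. This does not close on itself because of the joint moments on the right-hand side. Using $X_j \leq X_n$ a.s.\ (since the sequence is nondecreasing) one obtains $E[(X_n + X_j)^m] \leq 2^m M_m(n)$ and hence $\phi_m(n) \leq m n \log 2$. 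Conversely, Jensen's inequality $M_m(n) \geq M_1(n)^m$ combined with (\ref{eq:mu1-kac}) yields $\liminf_{n \to \infty} \phi_m(n)/\sqrt{n} \geq 2m > 0$. These a priori bounds, though crude, establish that $\phi_m(n) = \Omega(\sqrt{n})$ and $\phi_m(n) = O(n)$, but closing the gap to obtain the precise $\sqrt{n}$ asymptotic requires a more delicate analysis.

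For that, I would pass to generating functions. Define the exponential generating function $F_m(z) := \sum_n M_m(n) z^n/n!$, together with auxiliary bivariate generating functions encoding the joint moments $E[X_n^k X_j^{m-k}]$ that couple into the master recurrence. These satisfy a system of coupled functional/differential equations; proceeding by induction on $m$, with Kac's $m = 1$ analysis as the base case, the inductive hypothesis supplies the dominant singular behavior of the lower-order joint moment generating functions. The aim is to show that the dominant singularity of $F_m$ at $z = 1$ takes the universal form $\exp(d_m/(1-z))$ for some $d_m > 0$; applying saddle-point / Hayman admissibility techniques then yields $M_m(n) \sim A_m n^{-3/4} e^{c_m\sqrt{n}}$ with $c_m = 2\sqrt{d_m}$. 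The main obstacle will be isolating this essential singularity from lower-order contributions while managing the coupling to joint moments under the inductive hypothesis; for general $m \geq 3$ the value of $d_m$ is unlikely to admit a closed form, which would explain why the paper obtains only loose bounds on $c_m$.

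Finally, monotonicity $c_m < c_{m+1}$ follows immediately from Lyapunov's inequality $M_m(n)^{1/m} \leq M_{m+1}(n)^{1/(m+1)}$: taking logarithms, dividing by $\sqrt{n}$, and passing to the limit gives $c_m/m \leq c_{m+1}/(m+1)$, hence $c_{m+1} \geq c_m (m+1)/m > c_m$ since $c_m \geq 2m > 0$.
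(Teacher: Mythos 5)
Your setup is sound as far as it goes: the master recurrence, the a priori bounds $(2m+o(1))\sqrt{n} \leq \log E[X_n^m] \leq mn\log 2$, and the Lyapunov argument for monotonicity are all correct. (The monotonicity observation is in fact a worthwhile addition: Theorem~\ref{thm:main} as stated bounds each $c_m$ but never verifies that the sequence is increasing, and your inequality $c_{m+1} \geq c_m(m+1)/m$ settles that, \emph{conditional} on the limits existing.) The gap is that the existence of the limits --- the actual content of the conjecture --- is left as a plan rather than a proof, and the plan has a structural flaw. The joint moments $E[X_n^k X_j^{m-k}]$ entering your master recurrence all have total degree $m$, so an induction on $m$ cannot supply their asymptotics: the inductive hypothesis at level $m-1$ controls only quantities of total degree at most $m-1$. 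The system must instead be closed \emph{within} degree $m$, which forces one to track the whole family of sums $\sum_{\alpha_1,\dots,\alpha_{|\parts{p}|}} E[X_n^q \prod_\ell X_{\alpha_\ell}^{p_\ell}]$ indexed by partitions $\parts{p}$ of $m-q$; this is exactly the set $\mathcal{C}^{(m)}$ of the paper, and the resulting coupled linear ODE system is then handled not by guessing the singularity but by the substitution $u = 1/(1-z)$ followed by Perron--Frobenius theory for the leading matrix and the Hartman--Wintner theorem. Your sentence ``the aim is to show that the dominant singularity of $F_m$ at $z=1$ takes the universal form $\exp(d_m/(1-z))$'' names precisely the step that constitutes the paper's entire technical content, and you do not carry it out.

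A smaller but genuine inconsistency: since $M_m(n)$ grows sub-exponentially (your own bounds come close to showing this), the \emph{exponential} generating function $\sum_n M_m(n) z^n/n!$ is entire and has no singularity at $z=1$; the essential singularity $\exp(d_m/(1-z))$ lives in the \emph{ordinary} generating function, which is the object Kac and the paper actually analyze. You would need to correct this choice before any singularity or saddle-point analysis at $z=1$ makes sense.
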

\begin{conjecture} \label{conj:log-normal}
There are positive constants $\mu, \sigma$ such that
\begin{equation}
\frac{\log X_n - \mu n^{1/2}}{\sigma n^{1/4}} \to \mathcal{N}(\mu, \sigma^2) \text{ in law}.
\end{equation}
\end{conjecture}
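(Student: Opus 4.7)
The plan is to establish Conjecture~\ref{conj:log-normal} by the method of cumulants, leveraging a refined version of Conjecture~\ref{conj:sqrt}. Because $\log E[X_n^s]$ is the cumulant generating function of $\log X_n$ evaluated at $s$, if one can show it is asymptotic to $c(s)\sqrt{n}$ uniformly for real $s$ in a neighborhood of zero (with $c$ analytic and $c(m) = c_m$), then reading off derivatives at zero yields asymptotics for the cumulants of $\log X_n$, and in particular identifies the constants $\mu$ and $\sigma$.

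First I would extend the convergence $\log E[X_n^m]/\sqrt{n} \to c_m$ from integer $m$ to real $s$ in some interval around the origin. One route is to use H\"older together with the convexity of $s \mapsto \log E[X_n^s]$ to sandwich real moments between neighboring integer moments; another is to rerun whatever argument underlies Conjecture~\ref{conj:sqrt} with $s$ as a real parameter. Either way one wants the convergence uniform on a compact neighborhood of zero, with an explicit rate, so that the limit $c(s)$ is analytic and derivatives of the prelimit converge. Setting $\mu \defn c'(0)$ and $\sigma^2 \defn c''(0)$, the expansion $\log E[e^{s\log X_n}] = c(s)\sqrt{n} + o(\sqrt{n})$ gives $\kappa_k(\log X_n) \sim c^{(k)}(0)\sqrt{n}$ for each $k$, so that $\log X_n$ has mean $\sim \mu\sqrt{n}$, variance $\sim \sigma^2 \sqrt{n}$, and higher cumulants of order $\sqrt{n}$. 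For $Z_n \defn (\log X_n - \mu\sqrt{n})/(\sigma n^{1/4})$ this yields $\kappa_k(Z_n) = O(n^{1/2 - k/4}) \to 0$ for $k \geq 3$, while $\kappa_2(Z_n) \to 1$ and $\kappa_1(Z_n) \to 0$ (the last provided the mean is controlled to error $o(n^{1/4})$). The Fr\'echet--Shohat theorem then delivers $Z_n \to \mathcal{N}(0,1)$ in law, which is Conjecture~\ref{conj:log-normal} (apparently with a typo in the variance of the stated limit, which should be standard normal).

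The principal obstacle is the first step. Any proof of Conjecture~\ref{conj:sqrt} almost certainly leans on combinatorial identities or generating-function manipulations special to integer exponents, and upgrading to a real-parameter asymptotic with error control strong enough to justify differentiation at $s = 0$ requires substantially new input; in particular $E[\log X_n]$ must be pinned down to error $o(n^{1/4})$, which is strictly stronger than what the $m = 1$ case of Conjecture~\ref{conj:sqrt} gives by itself. Failing this refinement, one would attack $Y_n = \log X_n$ directly through the recursion $Y_{n+1} = Y_n + \log(1 + e^{Y_{U(n)} - Y_n})$ and try for a contraction-type CLT; but since $X_{n+1}$ depends on the full history via the uniform index $U(n)$, this is considerably more delicate than the recursive random structures typically handled by the contraction method.
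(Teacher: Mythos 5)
The paper does not prove this statement. It is posed as a conjecture, and Section~\ref{sec:conclusion} explicitly records that Conjecture~\ref{conj:log-normal} ``remains completely open.'' So there is no proof of the paper's to compare against; the only question is whether your plan closes the gap, and it does not --- as you yourself concede, the ``principal obstacle'' you identify in your last paragraph is essentially the entire content of the problem. What you have written is a correct description of how a cumulant/Fr\'echet--Shohat argument \emph{would} go if one already had $\log E[X_n^s] = c(s)\sqrt{n} + o(n^{1/4})$ uniformly for real $s$ near $0$ with $c$ twice differentiable at $0$; no step of the proposal supplies that input. H\"older/convexity interpolation between integer moments only yields $c(s) \le s\,c_1$ on $[0,1]$ and says nothing about $c''(0)$, and the paper's machinery (the multinomial expansion of $X_{n+1}^m$, the multiset-indexed recurrences, the Perron--Frobenius analysis) is tied to integer exponents and has no evident real-parameter analogue.

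There is also a quantitative obstruction showing that the integer moments cannot determine $\mu$ and $\sigma$ at all, so ``rerunning the argument behind Conjecture~\ref{conj:sqrt}'' is not merely hard but aimed at the wrong quantity. If $\log X_n \approx \mu\sqrt{n} + \sigma n^{1/4} Z$ with $Z$ asymptotically standard normal and the moments $E[X_n^m]$ were governed by this bulk behavior, one would have $c_m = m\mu + \tfrac{1}{2}m^2\sigma^2$, a quadratic in $m$. Fitting $c_1 = 2$ and $c_2 = \sqrt{2(5+\sqrt{17})} \approx 4.272$ gives $\mu \approx 1.864$, $\sigma^2 \approx 0.272$ and predicts $c_3 \approx 6.81$, whereas the paper computes $c_3 = 2\sqrt{\lambda^{(3)}} \approx 6.92$. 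Hence already the third moment is dominated by atypical trajectories rather than by the putative Gaussian bulk (the familiar failure of the method of moments for lognormal-type limits), and any extrapolation of $c(s)$ to $s \to 0$ from the integer values $c_m$ would identify the wrong constants. A genuine proof must control the typical behavior of $\log X_n$ directly --- for instance via the recursion $Y_{n+1} = Y_n + \log(1 + e^{Y_{U(n)} - Y_n})$ that you mention, or via moments $E[X_n^s]$ with $s \to 0$ at a rate tied to $n$ --- and that analysis is new input present neither in the paper nor in your sketch.
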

In \cite{clifford2008history}, a Poisson-regularized continuous time analogue of the Ulam-Kac adder was introduced as an alternate method of analyzing the asymptotic behavior of moments. This was applied to the first and second moment in several generalizations and also in \cite{roitershtein2022distribution} to analyze the tail distribution of related sequences.

We mention here a connection between the Ulam-Kac adder and addition chains which does not appear to have been explicitly stated previously. An addition chain for $n$ is a sequence $(a_0, a_1, a_2 \dots a_{m - 1}, a_m)$ such that $a_0 = 1, a_m = n$ where $a_{i} = a_{j} + a_{k}$ for each $1 \leq i \leq m$ and some $0 \leq j, k < i$. An open problem in computer science is to obtain efficiently the shortest addition chain for arbitrary $n$ \cite{scholz1937aufgabe, brauer1939addition, stolarsky1969lower, schonhage1975lower}. This remains an active area of research in high-performance computing \cite{bahig2019efficient, kadir2018performance}. In the case $j = i - 1$, the chain is said to have taken a \emph{star step.} 
%In \cite{bahig2011star} it was conjectured that every $n$ has a minimal length addition chain such that each step in the second half of the chain is a star step. 
With reference to Figure~\ref{fig:kac_paths}, we note that each realization of the Ulam-Kac adder is addition chain consisting only of star steps. Such chains are called Brauer chains, or simply star chains. Computing $P(X_k = n)$ is equivalent to computing the number of Brauer chains for $n$ that have length $k$. The first passage times of the Ulam-Kac adder are therefore directly related to minimal length Brauer chains. 

The main result of this paper is Theorem~\ref{thm:main}, which provides an affirmative answer to Conjecture~\ref{conj:sqrt} as well as bounds on the constants $c_m$ appearing therein.  
\begin{theorem}[Main result] \label{thm:main}
For all $m \geq 1$, there exists a $c_m \in [2m,  2^{\left(\tfrac{m}{2} + 1 \right) e^{3\sqrt{m}}}]$ such that
\begin{equation}
    \log E[X_n^m] \sim c_m \sqrt{n} .
\end{equation}
\end{theorem}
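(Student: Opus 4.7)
My plan is to analyze the moment sequence $\mu_m(n) \defn E[X_n^m]$ via its recurrence. Conditioning on the history $(X_0,\ldots,X_n)$ and using that $U(n)$ is independent and uniform on $\{0,\ldots,n\}$, binomial expansion gives
\begin{equation*}
\mu_m(n+1) = \mu_m(n) + \frac{1}{n+1}\sum_{j=0}^n \mu_m(j) + \sum_{k=1}^{m-1}\binom{m}{k}\frac{1}{n+1}\sum_{j=0}^n E\!\left[X_n^{m-k}X_j^k\right].
\end{equation*}
The first two terms depend only on the marginal moments (and for $m=1$ this recovers Kac's exactly-solvable ODE with closed-form solution $\Phi_1(s)=e^{1/(1-s)-1}/(1-s)$), but the $1\le k\le m-1$ cross terms are joint moments that obstruct a closed analysis; their control is the main technical point.

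The lower bound $c_m\ge 2m$ follows from Jensen: $\mu_m(n)\ge\mu_1(n)^m$ and $\log\mu_1(n)/\sqrt{n}\to 2$ by Eq.~\eqref{eq:mu1-kac}. For the upper bound I would apply H\"older to each joint moment, giving $E[X_n^{m-k}X_j^k]\le \mu_m(n)^{(m-k)/m}\mu_m(j)^{k/m}$ and hence a closed non-linear recurrence inequality in $(\mu_m(n))_n$. Running an induction with hypothesis $\mu_m(j)\le e^{c\sqrt{j}}$ for $j\le n$, combined with the integral estimate $\sum_{j=0}^n e^{\alpha\sqrt{j}}\sim (2\sqrt{n}/\alpha)e^{\alpha\sqrt{n}}$ at $\alpha=ck/m$, produces
\begin{equation*}
\mu_m(n+1)\le e^{c\sqrt{n}}\!\left(1+\frac{2mS_m}{c\sqrt{n}}\right), \qquad S_m\defn\sum_{k=1}^m\binom{m}{k}/k.
\end{equation*}
Comparing to $e^{c\sqrt{n+1}}\approx e^{c\sqrt{n}}(1+c/(2\sqrt{n}))$ closes the induction once $c^2\ge 4mS_m$, and the stated (very generous) upper bound is recovered by crude estimation of $S_m$ together with slack for the induction base and the lower-order errors.

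The main obstacle is promoting this $\limsup$ bound to an actual limit. My plan is to pair the H\"older upper bound with a matching lower bound --- obtained by dropping the non-negative cross terms of the recurrence and retaining only the $k=0$ and $k=m$ contributions --- to derive a two-sided estimate $\log\mu_m(n+1)-\log\mu_m(n) = (\phi(c_n)+o(1))/\sqrt{n}$, where $c_n\defn\log\mu_m(n)/\sqrt{n}$ and $\phi$ is an explicit function admitting a unique attracting fixed point in $[2m,\infty)$. A telescoping argument would then force $c_n\to c_m$. An alternative route, closer to the original methods of Kac and the Poissonization of \cite{clifford2008history}, is to extract the asymptotic from the singular behaviour $G_m(s)\asymp\exp(c_m^2/(4(1-s)))$ of the ordinary generating function $G_m(s)=\sum_n\mu_m(n)s^n$ at $s=1^-$ via a Hayman/Tauberian analysis, inverting the computation that gives $c_1=2$ from Kac's closed form. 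Either way, the decisive difficulty is to show that the joint-moment corrections do not disturb the fixed-point/saddle-point structure at leading order, which is where the uniform H\"older bound would be essential.
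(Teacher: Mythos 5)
Your lower bound via Jensen is exactly the paper's, and your H\"older-plus-induction upper bound on $\limsup_n \log\mu_m(n)/\sqrt{n}$ is plausible (modulo making the integral comparison uniform enough to close the induction). But the heart of the theorem is the \emph{existence} of the limit, and there your plan has a genuine gap that I do not think can be repaired within a single-sequence analysis. The two-sided estimate you propose does not sandwich $c_n$ against one function $\phi$ with one fixed point: dropping the cross terms gives the first-moment recurrence $\mu_m(n+1)\ge\mu_m(n)+\tfrac{1}{n+1}\sum_j\mu_m(j)$, whose fixed point is $c=2$, while the H\"older bound gives a strictly larger fixed point ($c^2\ge 4mS_m$, e.g.\ $c\ge\sqrt{20}\approx 4.47$ for $m=2$, versus the true $c_2=\sqrt{2(5+\sqrt{17})}\approx 4.27$). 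The joint moments $E[X_n^{m-k}X_j^k]$ contribute at leading exponential order --- they are what push $c_m$ strictly above $2m$ for $m\ge 2$ --- and H\"older is not log-asymptotically tight for them, so ``the joint-moment corrections do not disturb the structure at leading order'' is false. The same obstruction defeats the generating-function route: $G_m(s)$ alone satisfies no closed ODE.

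The paper's resolution is precisely to refuse to collapse the joint moments: it introduces the full family $C_n^{(m)}(q;\mathfrak{p})=\sum_{\alpha}E[X_n^q\prod_\ell X_{\alpha_\ell}^{p_\ell}]$ indexed by partitions, shows this family is \emph{closed} under the recurrence (Lemma~\ref{lemma:general-recur}), converts it to a linear ODE system for the vector of generating functions, and then extracts the limit from the Perron--Frobenius eigenvalue of the leading matrix via a Hartman--Wintner asymptotic (Lemmas~\ref{lemma:S-properties} and~\ref{lemma:asyptotic-perron}); the upper bound on $c_m$ comes from column-sum bounds on that eigenvalue rather than from H\"older. To salvage your approach you would need to identify the exact log-asymptotics of every cross term $E[X_n^{m-k}X_j^k]$, which is equivalent to analyzing the coupled system anyway.
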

The remainder of this paper is organized as follows. In Section~\ref{sec:m12}, we provide a brief review the basic method of computation for the $m = 1, 2$ moments. Then, Section~\ref{sec:proof-outline} contains our outline of the main steps of the proof. In Section~\ref{sec:m3} we apply our main result to compute $E[X_n^3]$ explicitly and compare the its value to a previous numerical estimation. Section~\ref{sec:proofs} contains all the proofs of the intermediate results required to obtain the main theorem. Finally, Section~\ref{sec:conclusion} contains our remarks on the tightness of the bounds in the main result as well as some additional conjectures related to this work. Appendix~\ref{sec:proof-extra} contains supplementary proofs.

\section{The \texorpdfstring{$m = 1, 2$}{m = 1, 2} moments}
\label{sec:m12}

The basic method for the analysis of the Ulam-Kac adder is to condition on possible values of $U(n)$ and telescoping the resulting series \cite{kac1989}, which we will now demonstrate. Defining $\mu_n \defn E[X_n]$, Eq.~\eqref{eq:kac-adder-def} gives
\begin{equation} \label{eq:first-moment-basic}
    \mu_{n + 1} = \mu_{n} + E[X_{U(n)}] =  \mu_{n} + \frac{1}{n + 1}\sum_{\ell = 0}^{n} \mu_{\ell},
\end{equation}
where we have applied $E[X_{U(n)} | U(n) = \ell] = \mu_{\ell}.$ We then take $n \to n + 1$ in Eq.~\eqref{eq:first-moment-basic} and eliminate the remaining sum to obtain
\begin{align}
    \mu_{n + 2} &=  \mu_{n +1 } + \frac{1}{n + 2}\left[mu_{n+1} + \sum_{m = 0}^{n} \mu_{m} \right] \\ 
0 &= (n + 2) \mu_{n + 2} - 2 (n + 2) \mu_{n + 1} + (n + 1) \mu_{n}, \label{eq:laguerre-recur}
\end{align}
along with the initial conditions $\mu_{0} = 1, \mu_{1} = 2$. The classical Laguerre polynomials $L_n(x)$ satisfy the recurrence relation \cite{szeg1939orthogonal}
\begin{equation}
    (2 + k)L_{k + 2}(x) - (2k + 3 - x) L_{k + 1}(x) + (k + 1) L_{k}(x) = 0, \quad L_{0} = 1, L_{1} = 1 - x.
\end{equation}
Comparing this to Eq.~\eqref{eq:laguerre-recur}, we see that they coincide when $x = -1$ and therefore 
\begin{equation}
    \mu_n = L_{n}(-1) = \sum_{\ell = 0}^{n} \binom{n}{\ell} \frac{1}{\ell!}.
\end{equation}
This directly leads to $\log \mu_n \sim 2 \sqrt{n}$ using the known asymptotics of Laguerre polynomials. See \cite{bennaim2002growth, krasikov2004growing} for alternative calculations of the same result.

We will now compute the second moment of the Ulam-Kac adder $\sigma_n \defn E[X_n^2] $ by determining an ODE satisfied by the generating function of the sequence $\{\sigma_n\}_{n \geq 0}$. By squaring Eq.~\eqref{eq:kac-adder-def}, we obtain
\begin{equation} \label{eq:kac-squared}
    X_{n + 1}^2 = X_{n}^2 + 2 X_{n} X_{U(n)} + X_{U(n)}^2 .
\end{equation}
We would like to calculate the expected value of this equation, but the cross term $X_{n} X_{U(n)}$ is not easily identified with $\sigma_n$. To obtain a closed system of recurrence relations, we define $\alpha_n \defn \sum_{\ell = 0}^{n-1} E[X_{n} X_{\ell}]$, then taking the expectation of Eq.~\eqref{eq:kac-squared} gives
\begin{equation}
    \sigma_{n + 1} = \sigma_{n} + \frac{2}{n + 1} \left(\sigma_n + \alpha_n \right) + \frac{1}{n + 1} \sum_{\ell = 0}^{n} \sigma_{\ell}. \label{eq:mu2-sig}
\end{equation}
Furthermore, by applying Eq.~\eqref{eq:kac-adder-def} to $\alpha_{n + 1}$, we obtain
\begin{equation}
\alpha_{n +1} = \sigma_{n}  + \alpha_n + \frac{1}{n + 1} \sum_{\ell = 0}^{n}  \left(\sigma_\ell + 2 \alpha_\ell \right). \label{eq:mu2-alp}
\end{equation} 
We define the generating functions $G(z) = \sum_{n = 0}^{\infty} \sigma_n z^n$ and $M(z) = \sum_{n = 0}^{\infty} \alpha_n z^n$ which have radii of convergence equal to 1 as will be shown in Section~\ref{sec:proof-outline}. Multiplying Eqs.~\eqref{eq:mu2-sig} and \eqref{eq:mu2-alp} through by $(n + 1) z^n$ and summing the resulting expressions over $n \geq 0$ gives
\begin{subequations} \label{eq:mu2-ode-system}
\begin{align}
    G' &=  (z G)' + 2(G + M) + \frac{1}{1 - z} G, \quad G(0) = 1\label{eq:G2-M2}\\ 
    M' &= (z G)' + (z M)' + \frac{1}{1 - z}(G + 2M), \quad M(0) = 0. 
\end{align}
\end{subequations}
We can eliminate $M(z)$ using Eq.~\eqref{eq:G2-M2} to obtain
\begin{equation} \label{eq:mu2-g-ode}
    (z^2 - 6z + 7) G(z) + (3z - 8) (1 - z)^2 G'(z) + (1 - z)^4 G''(z) = 0, \quad G(0) = 1, G'(0) = 4,
\end{equation}
which is the ODE we sought. Substitution of the asymptotic form $e^{S(x)}$ into Eq.~\eqref{eq:mu2-g-ode} and application of the method of dominant balance \cite{bender1999advanced} provides that $G(z) \sim \exp\left[ \left(\frac{5 + \sqrt{17}}{2} \right) \frac{1}{1 - z} \right]$ near $z = 1$ which implies that $\log \sigma_n \sim \sqrt{2(5 + \sqrt{17}) n}$.

\section{Outline of Proof}
\label{sec:proof-outline}

We will begin by constructing a closed set of recurrence relations for the $m^\text{th}$ moment. Then, we will create the system of linear differential equations for the associated generating functions. By closely studying the properties of the matrices defining this system, we are able to apply certain technical results for the asymptotic behavior of their solutions and thereby obtain Theorem~\ref{thm:main}.

Let $\mathcal{M}_{\geq 1}$ be the set which contains the empty set $\emptyset$ plus all nonempty multisets of finite cardinality on positive integers. We write an element $\parts{a} \in \mathcal{M}_{\geq 1}$ as $\parts{a} = [p_1, p_2, p_3 \dots p_n]$, where $p_i \in \mathbb{Z}_{\geq 1}$ for all $1 \leq i \leq n$ and it is allowed that $p_i = p_j$ for $i \neq j$. We call each $p_i$ a \emph{part} of $\parts{a}$ and write $\length{a} = n$ when $\parts{a}$ has $n$ parts. We reserve $\size{a}$ to denote the sum of all parts of $\length{a}$. We also recall that addition and subtraction can be defined on multisets as follows. If $\length{a}, \length{b} \in \mathcal{M}_{\geq 1}$ where $\length{a} = m$ and $\length{b} = n$ then $\parts{a} + \parts{b}$ is the multiset with $m + n$ parts created from the disjoint union of all parts of $\parts{a}$ and $\parts{b}$. Also, $\parts{a} - \parts{b}$ is the unique multisubset $\parts{c} \subseteq \parts{a}$ such that $\parts{c} + \parts{b} = \parts{a}$. Successive addition and subtraction should be applied from left to right, e.g. we have $\parts{a} + \parts{b} - \parts{c} = (\parts{a} + \parts{b}) - \parts{c}$.

Let $\mathcal{P}(n) = \{\parts{p} \in \mathcal{M}_{\geq 1} : \size{p} = n \}$ be the set of all integer partitions of $n\geq 0$ with the definition $\mathcal{P}(0) = \emptyset$. We define $\mathcal{C}^{(m)}$ as the set of all terms that can occur in our system of recurrence relations for the $m^{\text{th}}$ moment, 
\begin{equation}
    \mathcal{C}^{(m)} = \{ C_{n}^{(m)}(q; \parts{p}):  n \geq 1, 1 \leq q \leq m, \parts{p} \in \mathcal{P}(m - q) \}, \quad m \geq 1,
\end{equation}
such that each element of $\mathcal{C}^{(m)}$ is given by 
\begin{equation} \label{eq:genericC-def}
    C_n^{(m)}(q; \parts{p}) = \sum_{\alpha_1 = 0}^{n - 1} \sum_{\alpha_2 = 0}^{n - 1} \cdots \sum_{\alpha_{\length{p}} = 0}^{n - 1} E\left[X_n^q \prod_{\ell = 1}^{\length{p}} X^{p_\ell}_{\alpha_\ell} \right].
\end{equation}
Motivated by the patterns observed in Eqs.~\eqref{eq:mu2-sig} and \eqref{eq:mu2-alp}, we generally expect to obtain recurrence relations each with three types of terms: $x_n, x_n/(n + 1)$ and $\tfrac{1}{n+1} \sum_{\ell = 0}^{n} x_\ell$. We therefore introduce the following notation:
\begin{subequations}
\begin{align}
\label{eq:recurr-notation}
    \overline{x}_{n} &\defn \frac{x_n}{n + 1}, \\ 
    \hat{x}_{n} &\defn \frac{1}{n + 1} \sum_{\ell = 0}^{n} x_{\ell}.
\end{align}
\end{subequations}

We can now compute the most general recurrence relation, which is the result of the following Lemma whose proof is found in Section~\ref{sec:proof-lemma-general-recur}.
\begin{lemma} \label{lemma:general-recur}
For a fixed $m \geq 1$, each $C_{n+1}^{(m)}(q; \parts{p}) \in \mathcal{C}^{(m)}$ satisfies
\begin{subequations}
\label{eq:general-recur-result}
\begin{align}
     C_{n + 1}^{(m)}(q; \parts{p}) &= \sum_{\parts{b} \subseteq \parts{p}} C_{n}^{(m)} \left(q + \size{b} ; \parts{p} - \parts{b} \right) \label{eq:general-recur-result-top} \\ 
     &+ \sum_{\beta = 1}^{q - 1} \binom{q}{\beta} \sum_{\parts{b} \subseteq \parts{p} + [q - \beta ]} \overline{C}_{n}^{(m)} \left(\beta + \size{b} ; \parts{p}+ [q - \beta ]  - \parts{b} \right) \label{eq:general-recur-result-bar} \\
     &+ \sum_{\substack{\parts{b} \subseteq \parts{p} + [q ] \\ \parts{b} \neq \emptyset}} \hat{C}_{n}^{(m)} \left(\size{b} ; \parts{p}+ [q ] - \parts{b} \right). \label{eq:general-recur-result-hat}
\end{align}
\end{subequations}
\end{lemma}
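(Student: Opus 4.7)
The plan is to start from the definition of $C_{n+1}^{(m)}(q;\parts{p})$, substitute $X_{n+1} = X_n + X_{U(n)}$, expand via the binomial theorem writing $\beta$ for the exponent of $X_n$, and condition on $U(n)$ to replace $X_{U(n)}^{q-\beta}$ by $\frac{1}{n+1}\sum_{u=0}^n X_u^{q-\beta}$. The resulting expression splits naturally into three groups according to the value of $\beta$: the case $\beta = q$ carries no $u$-dependence and will yield line~\eqref{eq:general-recur-result-top}; the cases $1 \leq \beta \leq q-1$ retain a $\frac{1}{n+1}$ prefactor and couple $X_n^\beta$ with $X_u^{q-\beta}$, yielding line~\eqref{eq:general-recur-result-bar}; and $\beta = 0$ kills $X_n$ from the binomial factor entirely, yielding line~\eqref{eq:general-recur-result-hat}. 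Inside each group I would further split every sum $\sum_{\alpha_\ell=0}^n$ (and, for $\beta < q$, also $\sum_{u=0}^n$) into the subcase ``index equals $n$'' (absorbing a factor $X_n^{\text{part}}$ into the exponent of $X_n$) versus ``index at most $n-1$''. Indexing the ``equals $n$'' positions by a submultiset $\parts{b}$ produces the $C_n$ notation that matches~\eqref{eq:general-recur-result-top} in the $\beta=q$ case, and after combining the subcases $u=n$ (which correspond to those $\parts{b}$ containing the distinguished part $[q-\beta]$, since $X_u^{q-\beta}$ then fuses with $X_n^\beta$ to give $X_n^q$) with $u\leq n-1$ (which correspond to those $\parts{b}$ not containing $[q-\beta]$, since $u$ then acts as a genuine additional index of part $q-\beta$), one recovers the unrestricted inner sum $\sum_{\parts{b}\subseteq\parts{p}+[q-\beta]}$ of~\eqref{eq:general-recur-result-bar}.

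For the $\beta = 0$ term one is left with $\frac{1}{n+1}\sum_{u=0}^n\sum_\alpha E[X_u^q \prod_\ell X_{\alpha_\ell}^{p_\ell}]$, where $(u,\alpha_1,\ldots,\alpha_{\length{p}})$ ranges over all $(\length{p}+1)$-tuples in $\{0,\ldots,n\}$ carrying the multiset of exponents $\parts{p}+[q]$. The core combinatorial step is a max-index decomposition: partition the sum by the common value $\ell = \max(u,\alpha_1,\ldots,\alpha_{\length{p}})$ together with the nonempty set $T$ of positions attaining this maximum. Letting $\parts{b}$ be the multiset of parts at positions in $T$, the factors at index $\ell$ fuse into $X_\ell^{\size{b}}$, while the remaining factors (all at indices strictly less than $\ell$) reassemble into $C_\ell^{(m)}(\size{b};\parts{p}+[q]-\parts{b})$. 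Summing over $\ell\in\{0,\ldots,n\}$ and dividing by $n+1$ produces $\hat{C}_n^{(m)}(\size{b};\parts{p}+[q]-\parts{b})$, and summing over nonempty labelled submultisets $\parts{b}\subseteq\parts{p}+[q]$ completes line~\eqref{eq:general-recur-result-hat}.

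The main difficulty will lie in this last step, specifically in checking that the bijection between tuples and triples (maximum $\ell$, nonempty max-attaining set $T$, lower-index assignments) produces the correct multiplicities when $\parts{p}+[q]$ has repeated parts, so that the resulting labelled sum matches $\sum_{\parts{b}\subseteq\parts{p}+[q],\,\parts{b}\neq\emptyset}$ as written. The small cases $m=1$ and $m=2$, which reproduce Eqs.~\eqref{eq:first-moment-basic}, \eqref{eq:mu2-sig}, and~\eqref{eq:mu2-alp}, provide convenient sanity checks for both the $u=n$ versus $u<n$ split in the bar line and the max-index decomposition in the hat line before committing to the general bookkeeping.
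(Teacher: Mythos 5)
Your proposal is correct and follows essentially the same route as the paper: binomial expansion in $\beta$, conditioning on $U(n)$, and the ``index equals $n$ versus index at most $n-1$'' split indexed by labelled submultisets $\parts{b}$ (with multiplicity, exactly as needed when parts repeat). The only cosmetic difference is in the $\beta=0$ term, where the paper introduces the auxiliary quantity $D_n^{(m)}(q;\parts{p})$, derives the first-order recurrence $D_n = \sum_{\parts{b}\neq\emptyset} C_n + D_{n-1}$, and telescopes it --- which is precisely your max-index decomposition unrolled.
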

%%%%%%%%%%%%%%%%%%%%%%%%%%%%%%%%%%
%%%%%%%%%%%%%%%%%%%%%%%%%%%%%%%%%%
%%%%%%%%%%%%%%%%%%%%%%%%%%%%%%%%%%
We will show how Eq.~\eqref{eq:general-recur-result} can be written as a finite linear system of first order difference equations. This will allow us to obtain preliminary results concerning the asymptotic behavior of the system. We first collect the $C_{n}^{(m)}(q; \parts{p})$ into a vector, $h^{(m)}_n$. The following definition is important for this construction. For a fixed $m \geq 1$, the vector $h^{(m)}_n = ( C_{n}^{(m)}(q; \parts{p}) )$ is said to be in \emph{canonical ordering} if its entries are sorted first by ascending $\length{p}$ then by descending $q$. The number of entries in this vector is $\mathfrak{C}(m)$.

Now, Eq.~\eqref{eq:general-recur-result} can first be expressed generically as 
\begin{equation}
    h^{(m)}_{n + 1} = \mathbf{C}_1 h^{(m)}_{n} + \frac{1}{n + 1}\mathbf{C}_2 h^{(m)}_{n } + \frac{1}{n + 1} \sum_{\ell = 0}^{n} \mathbf{C}_3 h^{(m)}_{\ell}.
\end{equation}
Each of the matrices $\mathbf{C}_1, \mathbf{C}_2$ and $\mathbf{C}_3$ have entries which depend only on $m$ and can be extracted from Eqs.~\eqref{eq:general-recur-result-top}, \eqref{eq:general-recur-result-bar} and \eqref{eq:general-recur-result-hat}, respectively. Define
\begin{equation}
    u^{(m)}_{n + 1} = \sum_{\ell = 0}^{n} \mathbf{C}_3 h^{(m)}_{\ell} = \mathbf{C}_3 h^{(m)}_{n} + u^{(m)}_{n}.
\end{equation}
Then, we have
\begin{equation} \label{eq:linear-first-order}
    \begin{pmatrix}
        h^{(m)}_{n + 1} \\ 
        u^{(m)}_{n + 1}
    \end{pmatrix} 
    = 
    \begin{pmatrix}
        \mathbf{C}_1 + \tfrac{1}{n + 1} \mathbf{C}_2 & \tfrac{1}{n + 1} \mathbf{C}_3 \\ 
        \mathbf{C}_3 & \mathbf{I}
    \end{pmatrix}
        \begin{pmatrix}
        h^{(m)}_{n} \\ 
        u^{(m)}_{n}
    \end{pmatrix}. 
\end{equation}
We can now apply the following result due to Pituk on asymptotically constant systems of linear difference equations.
\begin{theorem}[\cite{pituk2002more}, Theorem 1] \label{thm:pituk}
Let $x = (x_n)_{n \in \mathbb{N}}$ be a solution of the difference equation $x_{n + 1} = \mathbf{L}_n x_n, n \in \mathbb{N}$. If $\lim_{n \to \infty} \mathbf{L}_n = \mathbf{L}$ for a constant matrix $\mathbf{L}$ then either $x_n = 0$ for all large $n$ or the limit
\begin{equation}
    \rho(x) = \lim_{n \to \infty} ||x_n||^{1/n}
\end{equation}
exists and is equal to the modulus of one of the eigenvalues of $\mathbf{L}$.
\end{theorem}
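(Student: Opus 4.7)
The plan is to prove Theorem~\ref{thm:pituk} in two stages. First, I would bound $\rho^* \defn \limsup_{n \to \infty} \|x_n\|^{1/n}$ from above by the spectral radius $\rho(\mathbf{L})$; then, assuming $x_n \neq 0$ for all large $n$, I would show that the liminf must match the limsup and that their common value coincides with one of the moduli $|\lambda|$ for $\lambda$ an eigenvalue of $\mathbf{L}$. The overall strategy is a Poincar\'e--Perron-type argument adapted to the asymptotically autonomous regime: the invariant-subspace decomposition of $\mathbf{L}$ is approximately preserved by the dynamics $x_{n+1} = \mathbf{L}_n x_n$ for $n$ large, and this approximate invariance pins the growth rate of any nonzero orbit to an eigenvalue modulus.

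For the upper bound, I would exploit the standard fact that for every $\epsilon > 0$ there is a norm $\|\cdot\|_\epsilon$ whose induced operator norm satisfies $\|\mathbf{L}\|_\epsilon \leq \rho(\mathbf{L}) + \epsilon$ (obtained by rescaling a Jordan basis of $\mathbf{L}$). Since $\mathbf{L}_n \to \mathbf{L}$, one has $\|\mathbf{L}_n\|_\epsilon \leq \rho(\mathbf{L}) + 2\epsilon$ for all $n \geq N_\epsilon$, and telescoping $x_n = \mathbf{L}_{n-1} \cdots \mathbf{L}_0 x_0$ yields $\|x_n\|_\epsilon \leq K_\epsilon (\rho(\mathbf{L}) + 2\epsilon)^n$. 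By equivalence of norms on the finite-dimensional state space, this gives $\rho^* \leq \rho(\mathbf{L}) + 2\epsilon$, and letting $\epsilon \downarrow 0$ closes the step.

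The heart of the argument is matching the growth rate to an eigenvalue modulus. Let $|\lambda_1| > |\lambda_2| > \cdots > |\lambda_r| \geq 0$ be the distinct moduli of the eigenvalues of $\mathbf{L}$, and let $V_j$ be the corresponding generalized eigenspaces, so that $\mathbf{L}$ leaves the splitting $\mathbb{C}^d = V_1 \oplus \cdots \oplus V_r$ invariant. Suppose for contradiction that $|\lambda_{k+1}| < \rho^* < |\lambda_k|$ for some $k$ (the extremal cases $\rho^* > |\lambda_1|$ and $\rho^* < |\lambda_r|$ being handled by the upper bound and a mirror argument respectively). Decompose $x_n = y_n + z_n$ with $y_n$ in the \emph{fast} subspace $V_1 \oplus \cdots \oplus V_k$ and $z_n$ in the \emph{slow} subspace $V_{k+1} \oplus \cdots \oplus V_r$. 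Under a norm adapted to $\mathbf{L}$, the restriction of $\mathbf{L}$ to the slow subspace has operator norm below $\rho^*$, while its restriction to the fast subspace is invertible with inverse norm below $1/\rho^*$. Because $\mathbf{L}_n \to \mathbf{L}$, these inequalities persist (with arbitrarily small slack) for $n$ large. A telescoping estimate then forces either $y_n = 0$ eventually, yielding $\rho^* \leq |\lambda_{k+1}|$, or $\|y_n\|^{1/n} \to$ a limit $\geq |\lambda_k| - \delta$ along a subsequence, yielding $\rho^* \geq |\lambda_k|$; either conclusion contradicts the placement of $\rho^*$ strictly between the moduli.

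The main technical obstacle is making this exponential-dichotomy argument rigorous: the $V_j$ are invariant only under $\mathbf{L}$, so $y_n$ and $z_n$ genuinely mix under the true dynamics. I would handle this by constructing approximately $\mathbf{L}_n$-invariant projectors $P_n$ onto the fast subspace for large $n$, via a contraction-mapping (roughness-of-dichotomy) argument in the Banach space of bounded projector sequences satisfying $\|P_n - P\| \to 0$, where $P$ is the spectral projector of $\mathbf{L}$. Standard perturbation bounds on the resulting cocycle give the two-sided growth estimates needed above. Finally, to upgrade ``$\rho^*$ equals an eigenvalue modulus'' to the existence of $\lim \|x_n\|^{1/n}$, I would apply the same dichotomy argument along any subsequence realizing $\liminf \|x_n\|^{1/n}$: the decomposition rules out oscillation between two distinct moduli, so the liminf and limsup must coincide, completing the proof.
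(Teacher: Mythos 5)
The paper does not prove Theorem~\ref{thm:pituk}: it is quoted verbatim, with attribution, from Pituk's 2002 article and invoked as a black box. The only thing the paper actually verifies is that the hypotheses are met by the augmented system in Eq.~\eqref{eq:linear-first-order}, namely that the coefficient matrix there converges to the constant $\mathbf{L}$ of Eq.~\eqref{eq:L-mat}. So there is no in-paper argument to compare against; you have attempted to reprove a cited auxiliary result rather than a claim the paper establishes.

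On its own terms, your sketch follows a reasonable Poincar\'e--Perron program and the upper-bound step via $\epsilon$-adapted norms is correct and standard. But three steps are genuine gaps rather than routine fill-ins. First, the ``mirror argument'' for $\rho^* < |\lambda_r|$ is unavailable when $\mathbf{L}$ or the $\mathbf{L}_n$ are singular; the theorem assumes no invertibility, and indeed if $0$ is an eigenvalue the smallest modulus is $0$, but if it is not, you still cannot run the dynamics backward without an extra argument. Second, the roughness-of-dichotomy contraction is invoked too casually: you only know $\|\mathbf{L}_n - \mathbf{L}\| \to 0$ with \emph{no rate}, so the perturbation is small only eventually, and the resulting projector and cocycle estimates carry slack factors of the form $(1 + o(1))^n$ that must be shown not to erode the spectral gap $|\lambda_k| - |\lambda_{k+1}|$; handling a non-summable $o(1)$ perturbation is precisely the technical core of Pituk's paper and of the older Coffman--Perron lineage it extends. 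Third, concluding that $\lim \|x_n\|^{1/n}$ \emph{exists} needs more than ``the dichotomy rules out oscillation'': ruling out values of $\rho^*$ strictly between adjacent moduli does not by itself preclude $\liminf$ and $\limsup$ landing on two \emph{different} eigenvalue moduli, and you would need to argue that the component of $x_n$ in one of the two competing spectral subspaces becomes and stays dominant. As it stands the proposal is a credible outline of the shape of such a proof but does not close the hard steps.
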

Examining Eq.~\eqref{eq:linear-first-order}, Theorem~\ref{thm:pituk} applies with 
\begin{equation} \label{eq:L-mat}
    \mathbf{L} =     \begin{pmatrix}
        \mathbf{C}_1  & \mathbf{0} \\ 
        \mathbf{C}_3 & \mathbf{I}
    \end{pmatrix}.
\end{equation}
We require the following proposition to compute the eigenvalues of $\mathbf{L}$.
\begin{proposition} \label{prop:Q-lower-tri}
    The matrix $\mathbf{C_1}$ is lower triangular and each entry on its diagonal is $1$.
\end{proposition}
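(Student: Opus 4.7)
The plan is to read off the entries of $\mathbf{C}_1$ directly from Eq.~\eqref{eq:general-recur-result-top}, since this is by construction the part of the recurrence in Lemma~\ref{lemma:general-recur} that carries no $\tfrac{1}{n+1}$ factor. The row of $\mathbf{C}_1$ indexed by $(q, \parts{p})$ has its nonzero entries in exactly the columns indexed by pairs $(q', \parts{p}')$ that arise as $(q + \size{b}, \parts{p} - \parts{b})$ for some $\parts{b} \subseteq \parts{p}$, with coefficient equal to the multiplicity of that representation.

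First I would isolate the diagonal contribution. For the column index to equal the row index $(q, \parts{p})$, one needs $\size{b} = 0$ together with $\parts{p} - \parts{b} = \parts{p}$. Since every part of a multiset in $\mathcal{M}_{\geq 1}$ is a strictly positive integer, $\size{b} = 0$ forces $\parts{b} = \emptyset$; this is the unique such submultiset, and it contributes coefficient $1$. Hence every diagonal entry of $\mathbf{C}_1$ equals $1$, with no duplicate contributions to worry about.

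Second, I would verify the lower triangularity. For any $\parts{b} \subseteq \parts{p}$ with $\parts{b} \neq \emptyset$, the resulting column index $(q', \parts{p}') = (q + \size{b}, \parts{p} - \parts{b})$ satisfies $\length{p'} = \length{p} - \length{b} < \length{p}$. Because the canonical ordering of $h^{(m)}_n$ sorts first by ascending $\length{p}$, every such column index is strictly earlier than the row index $(q, \parts{p})$; equivalently, every such matrix entry lies strictly below the diagonal. Combining this with the previous paragraph gives both claims of the proposition.

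There is no real obstacle here: the argument is pure bookkeeping against the definition of the canonical ordering together with the multiset-reduction structure of Eq.~\eqref{eq:general-recur-result-top}. The only subtle point worth flagging is the positivity of parts in $\mathcal{M}_{\geq 1}$, which is exactly what prevents a nonempty $\parts{b}$ from accidentally satisfying $\size{b} = 0$ and thereby creating an extra contribution to the diagonal or an off-diagonal entry with $\length{p'} = \length{p}$.
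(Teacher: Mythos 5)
Your proof is correct and follows essentially the same route as the paper's: read the entries of $\mathbf{C}_1$ off Eq.~\eqref{eq:general-recur-result-top}, note that $\parts{b} = \emptyset$ is the unique submultiset fixing the index $(q;\parts{p})$ (giving the ones on the diagonal), and use the canonical ordering to place all other contributions strictly below the diagonal. Your version merely spells out two details the paper leaves implicit — that positivity of parts forces $\size{b}=0 \Rightarrow \parts{b}=\emptyset$, and that nonempty $\parts{b}$ strictly decreases $\length{p}$ so the ascending-$\length{p}$ sort puts those columns earlier — which is a welcome clarification but not a different argument.
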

\begin{proof}
The entries of $\mathbf{C}_1$ are controlled by Eq.~\eqref{eq:general-recur-result-top}; it implies that $(q; \parts{p})$ depends on each $(q + \size{b}; \parts{p} - \parts{b})$ for $\parts{b} \subseteq \parts{p}$. Each $(q + \size{b}; \parts{p} - \parts{b})$ comes before $(q; \parts{p})$ in the canonical ordering, which implies the lower triangular structure. There is a unique $\parts{b}$ such that $(q + \size{b}; \parts{p} - \parts{b}) = (q; \parts{p} )$, namely the empty set, and hence there are ones on the diagonal of $\mathbf{C}_1$.
\end{proof}
By Proposition~\ref{prop:Q-lower-tri}, $\mathbf{L}$ and by Eq.~\eqref{eq:L-mat}, $\mathbf{L}$ is a lower triangular matrix whose diagonal entries are all equal to 1, and hence $\mathbf{L}$ has one eigenvalue equal to 1 of multiplicity $2\mathfrak{C}(m)$. Hence, by Theorem~\ref{thm:pituk}, we have that each increasing solution $x_n$ of Eq.~\eqref{eq:linear-first-order} satisfies
\begin{equation}
\lim_{n \to \infty} ||x_n||^{1/n} = 1.
\end{equation}
We remark that a theorem of Birkhoff and Trjitzinsky (Theorem 1 of \cite{wimp1985resurrecting}) then implies that $\log(x_n) \sim n^{p}$ for a rational $p \in (0, 1)$.

To each sequence $(C_{n}^{(m)}(q; \parts{p}))_{n \geq 0}$ we associate a generating function $G^{(m)}_{q, \parts{p}}(z)$ defined by
\begin{equation} \label{eq:most-general-generating}
    G^{(m)}_{q, \parts{p}}(z) = \sum_{n = 0}^{\infty} C_{n}^{(m)}(q; \parts{p}) z^n ,
\end{equation}
which has radius of convergence equal to 1 since the growth of $(C_{n}^{(m)}(q; \parts{p}))_{n \geq 0}$ is sub-exponential. We will apply Lemma~\ref{lemma:general-recur} to obtain a system of linear differential equations satisfied by these generating functions. Therefore, we collect them into a vector $g^{(m)}(z) = (G^{(m)}_{q, \parts{p}}(z))$ in the canonical ordering. We will refer to generating functions by just their indices $(q; \parts{p})$. Also, $(q; \parts{p})_i$ will refer to the $i^\text{th}$ generating function when listed in the canonical ordering. We will similarly refer to the partition appearing in the label of the $i^\text{th}$ generating function as $\parts{p}_i$. The $m$ label will generally be omitted for brevity except where required.

Now, we multiply each term in Eq.~\eqref{eq:general-recur-result} by $(n + 1)z^n$ and sum the resulting equation over all $n \geq 0$ to transform the recurrence relations to differential equations. Recalling that the generating functions of $a_n, \overline{a}_n$ and $\hat{a}_n$ are $(z A(z))', A(z)$ and $A(z)/(1 - z)$, respectively, we obtain the following system of differential equations
\begin{equation} \label{eq:ode-system-generic}
    g'(z) = z \mathbf{C}_1 g'(z) + \left[\mathbf{C}_1 + \mathbf{C}_2 + \frac{1}{1 - z} \mathbf{C}_3 \right] g(z).
\end{equation}
The matrix $(\mathbf{I} - z \mathbf{C}_1)^{-1}$ is invertible for $0 \leq z < 1$ (by Proposition~\ref{prop:Q-lower-tri}), so we can write
\begin{equation}
    g'(z) = \mathbf{R}(z) g(z), \quad \mathbf{R}(z) = (\mathbf{I} - z \mathbf{C}_1)^{-1} \left[\mathbf{C}_1 + \mathbf{C}_2 + \frac{1}{1 - z} \mathbf{C}_3\right].  \label{eq:R-def}
\end{equation}
We make the substitution
\begin{equation} \label{eq:P-def}
    g(z) = \mathbf{P}(z) h(z), \quad \mathbf{P}(z) = \text{diag}\{(1-z)^{-\lengthn{p}{1}}, (1-z)^{-\lengthn{p}{2}}, \dots, (1-z)^{-\lengthn{p}{\mathfrak{C}(m)}}\},
\end{equation}
as well as $u = 1/(1 - z)$ which results in a new system 
\begin{equation} \label{eq:S-def}
    h'(u) = \mathbf{S}(u) h(u).
\end{equation}
In Section~\ref{sec:proof-S-properties}, we establish the following lemma which collects the relevant properties of $\mathbf{S}(u)$.
\begin{lemma}
\label{lemma:S-properties}
For $u \geq 1$, and for some $N \geq 1$ there is a sequence of matrices $\{\mathbf{M}_i \}_{0 \leq i \leq N}$ such that $\mathbf{M}_0$ is primitive with Perron-Frobenius eigenvalue bounded above by $2^{\left(\tfrac{m}{2} + 1 \right) e^{3\sqrt{m}}}$ and such that the matrix $\mathbf{S}(u)$ defined in Eq.~\eqref{eq:S-def} can be expressed as
\begin{equation}
    \mathbf{S}(u) = \sum_{i = 0}^{N} u^{-i} \mathbf{M}_i .
\end{equation}
\end{lemma}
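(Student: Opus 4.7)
The plan is to explicitly carry out the substitutions in Eqs.~\eqref{eq:P-def}--\eqref{eq:S-def}, exploit the partition-length structure of the matrices $\mathbf{C}_1, \mathbf{C}_2, \mathbf{C}_3$ to derive the claimed polynomial-in-$1/u$ form of $\mathbf{S}(u)$, and then estimate the Perron-Frobenius eigenvalue of the leading coefficient $\mathbf{M}_0$ by crude combinatorics. Substituting $g = \mathbf{P}(z)h$ into $g' = \mathbf{R}(z)g$ and changing variables via $u = 1/(1-z)$ (so $dz/du = 1/u^2$), and noting that $\mathbf{P}(z)^{-1}\mathbf{P}'(z) = u\mathbf{D}$ where $\mathbf{D} = \mathrm{diag}(\lengthn{p}{1},\dots,\lengthn{p}{\mathfrak{C}(m)})$, I obtain
\begin{equation*}
    \mathbf{S}(u) = \frac{1}{u^2}\,\mathbf{P}(z)^{-1}\mathbf{R}(z)\mathbf{P}(z) - \frac{\mathbf{D}}{u}.
\end{equation*}
Since the $(i,j)$ entry of $\mathbf{P}^{-1}\mathbf{R}\mathbf{P}$ equals $u^{\lengthn{p}{j}-\lengthn{p}{i}}R_{ij}(z)$, the core task is to show that each $R_{ij}(z)$ has $u$-degree at most $\lengthn{p}{i}-\lengthn{p}{j}+2$.

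Writing $\mathbf{C}_1 = \mathbf{I}+\mathbf{N}$ with $\mathbf{N}$ strictly lower triangular (Proposition~\ref{prop:Q-lower-tri}), the geometric series yields the terminating expansion
\begin{equation*}
    (\mathbf{I}-z\mathbf{C}_1)^{-1} = \sum_{k\geq 0}u(u-1)^k\mathbf{N}^k.
\end{equation*}
From Eq.~\eqref{eq:general-recur-result-top}, every nonzero entry of $\mathbf{N}$ strictly decreases the partition length, so $(\mathbf{N}^k)_{ij}\neq 0$ forces $\lengthn{p}{i}-\lengthn{p}{j}\geq k$; hence each $(\mathbf{I}-z\mathbf{C}_1)^{-1}_{ij}$ has $u$-degree at most $\lengthn{p}{i}-\lengthn{p}{j}+1$. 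From Eqs.~\eqref{eq:general-recur-result-bar} and \eqref{eq:general-recur-result-hat}, the matrix $\mathbf{C}_2$ increases partition length by at most one while $\mathbf{C}_3$ never increases it, and the explicit factor $u$ in front of $\mathbf{C}_3$ contributes one additional power. Propagating these bounds through $\mathbf{R} = (\mathbf{I}-z\mathbf{C}_1)^{-1}[\mathbf{C}_1+\mathbf{C}_2+u\mathbf{C}_3]$ establishes the required degree bound and yields the finite expansion $\mathbf{S}(u) = \sum_{i=0}^{N}u^{-i}\mathbf{M}_i$. This entry-by-entry accounting is the main technical obstacle, since the degree bound is saturated by contributions from both $(\mathbf{I}-z\mathbf{C}_1)^{-1}\mathbf{C}_2$ and $u(\mathbf{I}-z\mathbf{C}_1)^{-1}\mathbf{C}_3$, making $\mathbf{M}_0$ nontrivial.

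Reading $\mathbf{M}_0$ off as the coefficient of $u^2$ in $\mathbf{P}^{-1}\mathbf{R}\mathbf{P}$, its entries are nonnegative combinations of entries of $\mathbf{C}_2, \mathbf{C}_3$ and powers of $\mathbf{N}$, so $\mathbf{M}_0$ is a nonnegative matrix. I verify primitivity by analyzing the dependency digraph on labels $(q;\parts{p})$ with edge from label $j$ to label $i$ iff $(\mathbf{M}_0)_{ij}>0$: saturated $\mathbf{C}_3$ contributions connect each label to labels with equal or smaller $\length{p}$, saturated $\mathbf{C}_2$ contributions provide edges to labels with larger $\length{p}$, and the choice $\parts{b}=[q]$ in Eq.~\eqref{eq:general-recur-result-hat} yields a self-loop at every label; combined, these give strong connectivity and aperiodicity.

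Finally, I bound the Perron-Frobenius eigenvalue of $\mathbf{M}_0$ by its maximum row sum. Each row's contribution is a finite sum of products of combinatorial factors: the number of sub-multisets of $\parts{p}+[q]$ is at most $2^{m+1}$, the binomial prefactors $\binom{q}{\beta}$ are at most $2^m$, the nilpotent-expansion entries $(\mathbf{N}^k)_{ij}$ count length-decreasing paths in a digraph on at most $\mathfrak{C}(m)$ vertices, and the number of nonzero columns is at most $\mathfrak{C}(m)$. Applying the Hardy-Ramanujan estimate $\mathfrak{C}(m)\leq m\,e^{\pi\sqrt{2m/3}}$ together with $\pi\sqrt{2/3}<3$, these crude bounds reduce to the stated upper bound $2^{(m/2+1)e^{3\sqrt{m}}}$ after routine simplification.
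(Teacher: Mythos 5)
Your proposal is correct and lands on the same $\mathbf{M}_0$ as the paper — the product of the leading $u$-coefficient of $(\mathbf{I}-z\mathbf{C}_1)^{-1}$ (the paper's $\mathbf{B}_1$ of Eq.~\eqref{eq:B1-def}) with the ``saturated'' part of $\mathbf{C}_2+\tfrac{1}{1-z}\mathbf{C}_3$ (the paper's $\mathbf{E}$ of Eq.~\eqref{eq:E-def}) — and your primitivity and spectral-radius arguments are in substance the ones the paper gives. Where you genuinely diverge is in how you control the resolvent. The paper proves an explicit inversion formula for triangular matrices (Proposition~\ref{prop:triangular-inverse}) and then runs a three-part induction (Proposition~\ref{prop:Q-properties}) to get the support, sign, and pole-order of $\mathbf{Q}^{-1}_{i,j}$. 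You instead write $\mathbf{C}_1=\mathbf{I}+\mathbf{N}$ and use the terminating Neumann series $(\mathbf{I}-z\mathbf{C}_1)^{-1}=\sum_k u(u-1)^k\mathbf{N}^k$, with the degree bound following from the observation that $(\mathbf{N}^k)_{ij}\neq 0$ forces $\lengthn{p}{i}-\lengthn{p}{j}\geq k$. This is cleaner: nonnegativity, the vanishing pattern, and the degree bound all fall out of one identity, and the leading coefficient is explicitly $(\mathbf{N}^{\lengthn{p}{i}-\lengthn{p}{j}})_{ij}$, whose positivity along the chains you need for strong connectivity (removing one part of $\parts{p}$ at a time) is immediate. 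Two minor cautions: the saturated $\mathbf{C}_3$ edges land on labels of \emph{equal} length, since $|\parts{p}+[q]-\parts{b}|=\length{p}$ when $\length{b}=1$, so the descent in $\length{p}$ must come from the $\mathbf{N}^k$ factor rather than from $\mathbf{C}_3$ itself; and your final sentence, asserting that the row-sum estimate ``reduces to'' $2^{(m/2+1)e^{3\sqrt{m}}}$ after routine simplification, is stated rather than carried out — the paper's own derivation of this constant is equally loose, so this does not put you below the target, but a self-contained version would need the constants (e.g., entries of $\mathbf{N}^k$ bounded by $(2^m\mathfrak{C}(m))^{\mathfrak{C}(m)}$) tracked explicitly.
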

To obtain the asymptotic behavior of $h(u)$, we will apply the following lemma, a consequence of the Hartman-Wintner theorem \cite{hartman1955asymptotic} whose proof is in Section~\ref{sec:proof-asymptotic-perron}.
\begin{lemma} \label{lemma:asyptotic-perron}
Let $d, n \geq 1$, let $(\mathbf{M}_i)_{0 \leq i \leq n}$ be $d \times d$ matrices for each $i$ and let $y(t) = (y_{i})_{1 \leq i \leq d}$ be a column vector. Consider the system of differential equations
\begin{equation} \label{eq:coppel2}
    y'(t) = \left(\sum_{\ell = 0}^{n} \mathbf{M}_\ell t^{-\ell} \right) y(t); \quad y(1) = y_0, t \geq 1.
\end{equation}
Then, if $\mathbf{M}_0$ is a non-negative, irreducible matrix with Perron-Frobenius eigenvalue $\lambda$, it holds for some $\alpha, \beta > 0$ that
\begin{equation} \label{eq:asymptotic-peron-result}
    y_{i}(t) \sim \alpha (t^\beta + o(t^\beta)) e^{\lambda t}, \quad 1 \leq i \leq d.
\end{equation}
\end{lemma}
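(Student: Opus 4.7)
The plan is to apply classical asymptotic theory for linear ODEs of Poincar\'e type, via a refinement of the Hartman-Wintner / Levinson theorem that handles non-integrable $1/t$ perturbations. First, I would apply the Perron-Frobenius theorem to $\mathbf{M}_0$: irreducibility gives a simple real eigenvalue $\lambda > 0$ (the spectral radius) with strictly positive right and left eigenvectors $v, u$, normalized so that $u^T v = 1$, and every other eigenvalue $\mu$ of $\mathbf{M}_0$ satisfies $\operatorname{Re}(\mu) < \lambda$. Hence the spectrum of $\mathbf{M}_0$ splits cleanly into the Perron eigenvalue and a strictly subdominant part.

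Second, I would reduce to a standard asymptotic form by extracting the exponential and polynomial factors. Writing $\mathbf{M}(t) = \mathbf{M}_0 + \mathbf{M}_1/t + \mathbf{R}(t)$ with $\mathbf{R}(t) = O(t^{-2}) \in L^1([1, \infty))$, the substitution
\[
y(t) = e^{\lambda t}\, t^{\beta} w(t), \qquad \beta \defn u^T \mathbf{M}_1 v,
\]
converts the system into
\[
w'(t) = \left[(\mathbf{M}_0 - \lambda \mathbf{I}) + \tfrac{1}{t}(\mathbf{M}_1 - \beta \mathbf{I}) + \mathbf{R}(t)\right] w(t).
\]
The value $\beta$ is precisely the resonance correction that guarantees $u^T(\mathbf{M}_1 - \beta \mathbf{I})v = 0$, so the surviving $1/t$ coefficient carries no component along the one-dimensional Perron direction. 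A Harris-Lutz diagonal transformation $w = (\mathbf{I} + \mathbf{T}_1/t)\tilde w$, with $\mathbf{T}_1$ chosen to interlace the spectral projectors of $\mathbf{M}_0$ onto $\operatorname{span}(v)$ and its $\mathbf{M}_0$-invariant complement, then absorbs the residual $1/t$ term into the subdominant block and leaves the system $\tilde w' = (\mathbf{M}_0 - \lambda \mathbf{I} + \tilde{\mathbf{R}}(t)) \tilde w$ with $\tilde{\mathbf{R}} \in L^1([1, \infty))$.

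Third, I would invoke Levinson's fundamental theorem for asymptotically autonomous linear systems. The leading matrix $\mathbf{M}_0 - \lambda \mathbf{I}$ has $0$ as a simple eigenvalue with right eigenvector $v$, while all other eigenvalues have strictly negative real part; combined with the $L^1$ perturbation $\tilde{\mathbf{R}}$, Levinson's theorem produces a fundamental solution matrix whose dominant column satisfies $\tilde w(t) \to v$ as $t \to \infty$. Projecting the initial condition $y_0$ onto this dominant mode via $\alpha = u^T y_0$ and unwinding the substitutions yields $y_i(t) \sim \alpha v_i \, (t^\beta + o(t^\beta)) e^{\lambda t}$ for each component, which is the desired conclusion (with positivity $\alpha v_i > 0$ following from the strict positivity of $v$).

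The principal obstacle is exactly that $\mathbf{M}_1/t$ fails to be integrable, so a direct application of Levinson's theorem is not available; the resolution via the Harris-Lutz diagonalization, together with the explicit identification of $\beta = u^T \mathbf{M}_1 v$ as the correct resonance exponent, is the technically delicate step. A secondary check is that the block-triangularization and diagonal transformation preserve the $L^1$ nature of the residual, which they do because $\mathbf{T}_1$ is a constant matrix and hence only introduces terms of the form $\mathbf{T}_1 \mathbf{R}(t)/t$, which still lie in $L^1([1,\infty))$.
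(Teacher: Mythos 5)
Your argument is essentially correct in outline but takes a genuinely different route from the paper. The paper handles the non-integrable $1/t$ perturbation by invoking the Hartman--Wintner theorem directly (in Pituk's formulation), which only requires the perturbation $\mathbf{B}(t)=\sum_{\ell\ge 1}\mathbf{M}_\ell t^{-\ell}$ to lie in $L^2$ and packages the resonance correction into the factor $\exp\bigl[\int_{t_0}^t\delta(\tau)\,\mathrm{d}\tau\bigr]$, whose logarithmic divergence produces the $t^\beta$; it then rules out faster-growing solutions via Eastham's theorem applied block-by-block to the Jordan form of $\mathbf{M}_0$. You instead unpack Hartman--Wintner by hand: extract $e^{\lambda t}t^{\beta}$ with the resonance exponent $\beta=u^T\mathbf{M}_1 v$ chosen to annihilate the Perron-diagonal part of the $1/t$ term, remove the off-diagonal $1/t$ blocks by a Harris--Lutz transformation, and finish with Levinson. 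Both are standard and lead to the same conclusion; your route makes the value of $\beta$ explicit (and identifies it as the correct resonant coefficient $u^T\mathbf{M}_1 v/(u^Tv)$), while the paper's route is shorter because the $L^2$ hypothesis of Hartman--Wintner sidesteps the conditioning transformation entirely. Your observation that for an irreducible (not necessarily primitive) non-negative matrix the Perron root is still the unique eigenvalue of maximal real part is correct and is exactly what makes either theorem applicable.

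Two imprecisions are worth noting. First, after the Harris--Lutz step the residual is not in $L^1([1,\infty))$ as you claim: the diagonal-block component $Q(\mathbf{M}_1-\beta\mathbf{I})Q/t$ on the subdominant invariant subspace survives. This does not break the argument --- Levinson's theorem tolerates a $t$-dependent block-diagonal part as long as the dichotomy condition holds, which it does here since the subdominant eigenvalues of $\mathbf{M}_0-\lambda\mathbf{I}$ have strictly negative real part --- but you should apply Levinson in that form rather than assert an $L^1$ remainder. Second, the identification $\alpha=u^Ty_0$ is heuristic: the coefficient of the dominant mode in the solution with data $y_0$ is determined by the fundamental matrix at the (large) starting time $t_0$, not by the spectral projector of $\mathbf{M}_0$ alone, and one must separately argue that this coefficient is nonzero for the solutions of interest (a point the paper also glosses over).
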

We can now prove Theorem~\ref{thm:main}.
\begin{proof}[Proof of Theorem~\ref{thm:main}]
By Lemmas~\ref{lemma:S-properties} and \ref{lemma:asyptotic-perron}, for a given $m \geq 1$, Eq.~\eqref{eq:S-def} admits solutions $(h_i^{(m)}(u))_{1 \leq i \leq \mathfrak{C}(m)} $ such that for some $\alpha, \beta >0$,
\begin{equation}
    h_{i}^{(m)}(u) \sim \alpha(u^\beta + o(u^\beta))e^{\lambda_m u}, \quad 1 \leq i \leq \mathfrak{C}(m),
\end{equation}
where $\lambda_m$ is the Perron-Frobenius eigenvalue of the matrix $\mathbf{M}_0$ defined in Lemma~\ref{lemma:S-properties}. Hence, we have $G_{m, \emptyset}^{(m)}(z) \sim \frac{\alpha}{(1 - z)^\beta} \exp\left(\frac{\lambda_m}{1 - z} \right)$ which by the method of saddle point asymptotics \cite{flajolet2009analytic} implies that $\log E[X_n^m] \sim 2 \sqrt{\lambda_m n}$.

We will now put the required bounds on $\lambda_m$. Note that a lower bound can be obtained quickly; by Jensen's inequality, we have 
\begin{equation} \label{eq:cm-lower}
    E[X_n^m] \geq E[X_n]^m \implies \log E[X_n^m] \sim c_m \sqrt{n} \quad \text{for } c_m \geq 2m.
\end{equation} 
To obtain an upper bound, we apply Lemma~\ref{lemma:S-properties} to obtain $\lambda_m \leq 2^{\left(\tfrac{m}{2} + 1 \right) e^{3\sqrt{m}}}$ and hence that 
\begin{equation} \label{eq:cm-upper}
    \log E[X_n^m] \sim c_m \sqrt{n} \quad \text{for } c_m \leq 2^{\left(\tfrac{m}{2} + 1 \right) e^{3\sqrt{m}}}.
\end{equation}
Combining Eqs.~\eqref{eq:cm-lower} and \eqref{eq:cm-upper} completes the proof.
\end{proof}

\section{Example: \texorpdfstring{$m = 3$}{m = 3}}
\label{sec:m3}

We will now compute the exact value of the third moment of the Ulam-Kac adder, $E[X_n^3]$. This has not been explicitly computed previously, although \cite{bennaim2002growth} estimated $\log E[X_n^3] \sim 6.5 \sqrt{n}$ based on numerical evidence. When $m = 3$, there are four types of sequences in $\mathcal{C}^{(3)}$, namely 
\begin{subequations}
\label{eq:mu3-seqs}
\begin{align}
    \gamma_n &\defn C_{n}^{(3)}(3, \emptyset) = E[X_n^3], \\
    a_n &\defn C_{n}^{(3)}(2, \{1\}) = \sum_{\ell = 0}^{n-1} E[X_{n}^2 X_\ell], \\ 
    b_n &\defn C_{n}^{(3)}(1, \{2\}) = \sum_{\ell = 0}^{n-1} E[X_{n} X_\ell^2] ,  \\
    c_n &\defn C_{n}^{(3)}(1, \{1, 1\}) = \sum_{\ell = 0}^{n-1}\sum_{m = 0}^{n-1}  E[X_{n} X_\ell X_m]. 
\end{align}
\end{subequations}
Applying Lemma~\ref{lemma:general-recur}, we obtain the following system of recurrence relations. 
\begin{subequations}
\label{eq:mu3-system}
\begin{align}
    \gamma_{n+1} &= \gamma_n + 3\left( 2 \overline{\gamma}_n + \overline{a}_n + \overline{b}_n \right) + \hat{\gamma}_n,  \label{eq:mu3-gamman} \\ 
    a_{n + 1} &= \gamma_n + a_n + 2 \left(\overline{\gamma}_n + 2 \overline{a}_n + \overline{c}_n \right) + \left(\hat{\gamma}_n + \hat{a}_n + \hat{b}_n \right), \label{eq:mu3-an} \\ 
    b_{n + 1} &= \gamma_n + b_n + \left(\hat{\gamma}_n + \hat{a}_n + \hat{b}_n \right), \label{eq:mu3-bn} \\ 
    c_{n + 1} &= \gamma_n + c_n + 2a_n + \left(\hat{\gamma}_n + 3 \hat{a}_n + 3 \hat{c}_n \right). \label{eq:mu3-cn}
\end{align}
\end{subequations}
We elaborate on how Eq.~\eqref{eq:mu3-an} is obtained; the other equations follow similarly. To calculate $c_n$ we set $m = 3, q = 2$ and $\parts{p} = \{1\}$ in Eq.~\eqref{eq:general-recur-result}. The set of subsets of $\parts{p}$ is $\{\{1\}, \emptyset \}$, hence the term on the righthand side of Eq.~\eqref{eq:general-recur-result-top} is
\begin{equation}
 C_{n}^{(m)}(2 + 1; \emptyset) + C_{n}^{(m)}(2 + 0; \{1\}) = \gamma_n + a_n. 
\end{equation}
Consider now the term on Eq.~\eqref{eq:general-recur-result-bar}. Since $q = 2$, the sum over $\beta$ contains only the term $\beta = 1$, hence there will be an overall factor of $\binom{2}{1} = 2$. The sum over $\parts{b}$ is a sum over the four subsets of $\parts{p} + [1] = \{1, 1\}$, hence the entire term is
\begin{equation}
    2[\overline{C}_{n}^{(m)} \left(1 + 2 ; \emptyset \right) + 2 \overline{C}_{n}^{(m)} \left(1 + 1 ; \{1\} \right) + \overline{C}_{n}^{(m)} \left(1 + 0 ; \{1, 1\} \right)] = 2(\overline{\gamma}_n + 2\overline{a}_{n} + \overline{c}_{n}).
\end{equation}
Finally, the term on Eq.~\eqref{eq:general-recur-result-hat} is a sum over the three non-empty subsets of $\parts{p} + [2] = \{1, 2 \}$, giving
\begin{equation}
    \hat{C}_{n}^{(m)} \left(3 ; \emptyset \right) + \hat{C}_{n}^{(m)} \left(2  ; \{1\} \right) + \hat{C}_{n}^{(m)} \left(1 ; \{2\} \right) = \hat{\gamma}_n + \hat{a}_n + \hat{b}_n.
\end{equation}
Collecting all these terms gives Eq.~\eqref{eq:mu3-an} as required. Next, we construct the vector of generating functions in canonical order, $g = (G^{(3)}_{3, \emptyset},G^{(3)}_{2, \{1\}}, G^{(3)}_{1, \{2\}}, G^{(3)}_{1, \{1, 1\}} )$. Multiplying Eqs. \eqref{eq:mu3-system} by $(n + 1) z^n$ and summing over $n$ results in a system of differential equations $g'(z) = \mathbf{R}(z) g(z)$ where
\begin{equation}
\mathbf{R}(z) = \frac{1}{(1 - z)^2} \left(
\begin{array}{cccc}
 8-7 z & 3-3 z & 3-3 z & 0 \\
 4 z+\frac{1}{1-z}+3 & 6-2 z & 3 z+1 & 2-2 z \\
 6 z+\frac{1}{1-z}+1 & 3 z+1 & 2 (z+1) & 0 \\
 \frac{2-(z-1) z (2 z+11)}{(z-1)^2} & 5 z-\frac{8}{z-1}-3 & -\frac{z (3 z+5)}{z-1} & 3 z+4 \\
\end{array}
\right).
\end{equation}
We make the transformation $g(z) = \mathbf{P}(z) h(z)$ where 
\begin{equation}
    \mathbf{P} = \text{diag}\{(1-z)^{-0}, (1-z)^{-1}, (1-z)^{-1}, (1-z)^{-2}  \},
\end{equation}
as well as the substitution $u = 1/(1 - z)$, which results in the system
\begin{equation} \label{eq:mu3-de-trans}
    h(u)' = \left(\mathbf{M}_0 + \mathbf{M}_1 u^{-1} + \mathbf{M}_2 u^{-2} + \mathbf{M}_3 u^{-3} \right) h(u),
\end{equation}
where
\begin{equation}
    \mathbf{M}_0  = \left(\begin{array}{cccc}
 1 & 3 & 3 & 0 \\
 1 & 4 & 4 & 2 \\
 1 & 4 & 4 & 0 \\
 2 & 8 & 8 & 7 \\
\end{array}\right).
\end{equation}
Observe that $\mathbf{M}_0$ is a primitive matrix with Perron-Frobenius eigenvalue $\lambda^{(3)}$ given by 
\begin{subequations}
\label{eq:mu3-PF}
\begin{align}
 \lambda^{(3)} &= \max_{x \in \mathbb{R}}\{x^4 - 16 x^3 + 49 x^2 - 10 x = 0\} \\ 
 &= \frac{2}{3} \left(8+\sqrt{109} \cos \left(\frac{1}{3} \tan ^{-1}\left(\frac{6 \sqrt{22245}}{703}\right)\right)\right) \\
 &= 11.979293127704\dots 
\end{align}
\end{subequations}
By the same arguments in the proof of Theorem~\ref{thm:main}, we conclude that 
\begin{equation} \label{eq:mu3-res}
    \log E[X_n^3] \sim 2 \sqrt{\lambda^{(3)} n} \approx 6.92 \sqrt{n}.
\end{equation}
To compare this with the previous numerical estimate $\log E[X_n^3] \sim 6.5 \sqrt{n}$ from \cite{bennaim2002growth}, we reduce $g'(z) = \mathbf{R}(z) g(z)$ to a single differential equation for $\Gamma(z) \defn G_{3, \emptyset}^{(3)}(z)$. We find the following fourth-order differential equation,
\begin{equation} \label{eq:G3-diff}
    \sum_{n = 0}^{4} f_n(z) \frac{\text{d}^n \Gamma}{\text{d} z^n} = 0, \quad \Gamma(0) = 1, \Gamma'(0) = 8, \Gamma''(0) = 91, \Gamma'''(0) = 1258,
\end{equation}
where
\begin{align*}
    f_{0}(z) &= 17 - 44 z + 21 z^2 - 2 z^3 ,\\ 
    f_{1}(z) &= (1 - z) (-236 + 377 z - 173 z^2 + 22 z^3) ,\\
    f_{2}(z) &= (1 - z)^3 (174 - 156 z + 31 z^2) ,\\ 
    f_{3}(z) &= (1 - z)^5 (-27 + 11 z) ,\\ 
    f_{4}(z) &= (1 - z)^7 .
\end{align*}
We can construct a power series solution with $N$ terms to Eq.~\eqref{eq:G3-diff} to calculate the exact values of the sequence $(E[X_n^3])_{0 \leq n \leq N}$ and thus obtain numerical estimates for its asymptotic behavior. Substituting $\Gamma(z) = \sum_{n = 0}^{\infty} \gamma_n z^n$ into Eq.~\eqref{eq:G3-diff} gives the following recurrence relation for $(\gamma_n)_{n \geq 0}$,
\begin{align}
    &\sum_{i = 0}^{7} g_{i}(n) \gamma_{n + i} = 0, \quad \text{where} \notag\\ 
    &\gamma_0 = 1, \gamma_1 = 8, \gamma_2 = \tfrac{91}{2}, \gamma_3 = \tfrac{629}{3}, \gamma_4 = \tfrac{20003}{24}, \gamma_5 = \tfrac{8893}{3}, \gamma_6 = \tfrac{6953959}{720},
\end{align}
and where 
\begin{align*}
    g_0(n) &= -(1 + n)^3 (2 + n), \\
    g_1(n) &= (2 + n)^2 (54 + 40 n + 7 n^2), \\
    g_2(n) &= -(2614 + 3203 n + 1449 n^2 + 287 n^3 + 21 n^4), \\
    g_3(n) &= 10262 + 10108 n + 3686 n^2 + 590 n^3 + 35 n^4 , \\
    g_4(n) &= -(4 + n)(4430 + 2688 n + 535 n^2 + 35 n^3) ,\\
    g_5(n) &= (4 + n) (5 + n) (738 + 251 n + 21 n^2) , \\
    g_6(n) &= -(4 + n) (5 + n) (6 + n) (48 + 7 n),\\
    g_7(n) &= (n + 4)(n + 5)(n + 6)(n + 7).
\end{align*}
Hence, the first $N$ terms in the power series can be enumerated with the help of a computer. The relative error of the $n^\text{th}$ term is shown in Figure~\ref{fig:error3} where it can be seen that the convergence of $\log[E_n^3]/\sqrt{n}$ to its limiting value is extremely slow. 
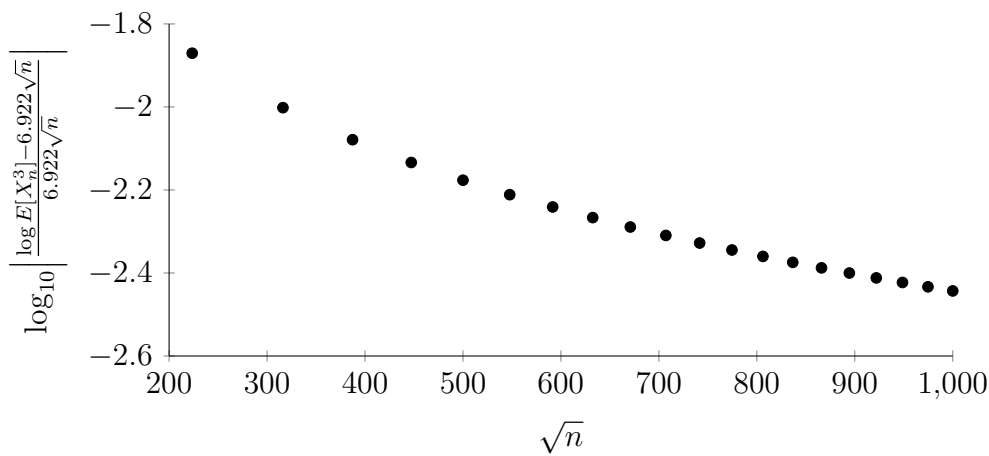
\begin{figure}[ht]
    \centering
\begin{tikzpicture}
\begin{axis}[
    height = 6cm,
    width = 12cm,
    xmin = 200,
    xmax = 1000,
    ymin = -2.6,
    ymax = -1.8,
    axis lines = left,
    xlabel = $\sqrt{n}$,
   ylabel = {$\log_{10}\left| \frac{\log E[X_n^3] - 6.922 \sqrt{n}}{6.922 \sqrt{n}} \right|$},
    xtick = {200,300,400,500,600,700,800,900,1000},
    ytick = {-1.8,-2.0,-2.2,-2.4,-2.6},
%    legend pos = south east,
    axis line style={-}
]

\addplot[scatter, only marks, scatter src=explicit symbolic, scatter/classes = {a = {mark = circle*, black}}]
table[col sep = comma]{data/error3.csv};
\end{axis}
\end{tikzpicture}
\caption{The convergence of $E[X_n^3]/\sqrt{n}$ to its limiting value from Eq.~\eqref{eq:mu3-res}.}
\label{fig:error3}
\end{figure}
Performing a linear fit to the terms $(\sqrt{n}, \log \gamma_n)_{900 \leq n \leq 1000}$ to compare with \cite{bennaim2002growth} returns a slope of roughly $6.83$, which means that \cite{bennaim2002growth} must indeed be an underestimate. Extending our power series to $N = 10^6$ provides a slope of $6.92$, in agreement with Eq.~\eqref{eq:mu3-res}. 

\begin{comment}
-(1 + n)^3 (2 + n) a[
   n] + (326 + (96 + 7 (-4 + n)) (-4 + n)) (2 + n)^2 a[1 + n] - 
 62354 a[2 + n] + 156390 a[3 + n] - 207856 a[4 + n] + 
 149616 a[5 + n] - 54720 a[6 + n] + 
 7920 a[7 + n] + (-4 + 
    n) (-((33947 + 7 (987 + (89 + 3 (-4 + n)) (-4 + n)) (-4 + n)) a[
        2 + n]) + (76876 + (14126 + 
          5 (230 + 7 (-4 + n)) (-4 + n)) (-4 + n)) a[3 + n] - 
    95166 a[4 + n] + 65494 a[5 + n] - 23432 a[6 + n] + 
    3382 a[7 + n] + (-4 + 
       n) (-((16288 + 5 (247 + 7 (-4 + n)) (-4 + n)) a[
           4 + n]) + (10713 + (776 + 21 (-4 + n)) (-4 + n)) a[
         5 + n] - (3746 + (265 + 7 (-4 + n)) (-4 + n)) a[
         6 + n] + (539 + (-4 + n) (34 + n)) a[7 + n]))

\end{comment}

% dsolve2 = 
%  AsymptoticDSolveValue[{(z^2 - 6 z + 7) G[z] + (3 z - 8) (1 - z)^2 G'[
%       z] + (1 - z)^4 G''[z] == 0, G[0] == 1, G'[0] == 4}, 
%   G[z], {z, 0, 1000}]

% dsolve = AsymptoticDSolveValue[{(17 - 44 z + 21 z^2 - 2 z^3) G[
%     z] + (1 - 
%      z) ((-236 + 377 z - 173 z^2 + 22 z^3) (G^\[Prime]')[
%       z] + (-1 + z)^2 (174 - 156 z + 31 z^2) G''[
%       z] + (-1 + z)^4 (-27 + 11 z) G'''[z] + (-1 + z)^6 G''''[z]) == 
%  0, G[0] == 1, G'[0] == 8, 
%     G''[0] == 91, G'''[0] == 1258}, G[z], {z, 0, 1000}];

\clearpage

\section{Proofs of the main steps}
\label{sec:proofs}

\subsection{Proof of Lemma~\ref{lemma:general-recur}}
\label{sec:proof-lemma-general-recur}

We take $n \to n + 1$ in Eq.~\eqref{eq:genericC-def}, then apply Eq.~\eqref{eq:kac-adder-def} raised to the power of $m$ expanded out with the binomial theorem to obtain
\begin{equation}
    C_{n + 1}^{(m)}(q; \parts{p}) = \sum_{\beta = 0}^{q} \binom{q}{\beta} \sum_{\alpha_1 = 0}^{n} \sum_{\alpha_2 = 0}^{n} \cdots \sum_{\alpha_{\length{p}} = 0}^{n} E\left[ X_{n}^\beta X_{U(n)}^{q - \beta} \prod_{\ell = 1}^{\length{p}} X^{p_\ell}_{\alpha_\ell} \right].
\end{equation}
We apply the usual method of conditioning on the value of $U(n)$ inside the expectation. We expand the result onto three lines which will be analyzed sequentially;
\begin{subequations}
\begin{align}
    C_{n + 1}^{(m)}(q; \parts{p}) &= \sum_{\alpha_1 = 0}^{n} \sum_{\alpha_2 = 0}^{n} \cdots \sum_{\alpha_{\length{p}} = 0}^{n}  E\left[ X_{n}^q \prod_{\ell = 1}^{\length{p}} X^{p_\ell}_{\alpha_\ell} \right] \label{eq:genericC-beta-q} \\
    &+ \frac{1}{n + 1} \sum_{\beta = 1}^{q - 1} \binom{q}{\beta} \sum_{\alpha_1 = 0}^{n} \sum_{\alpha_2 = 0}^{n} \cdots \sum_{\alpha_{\length{p}} = 0}^{n} \sum_{\gamma = 0}^{n} E\left[ X_{n}^\beta X_{\gamma}^{q - \beta} \prod_{\ell = 1}^{\length{p}} X^{p_\ell}_{\alpha_\ell} \right] \label{eq:genericC-beta-else} \\
    &+ \frac{1}{n + 1} \sum_{\alpha_1 = 0}^{n} \sum_{\alpha_2 = 0}^{n} \cdots \sum_{\alpha_{\length{p}} = 0}^{n} \sum_{\gamma = 0}^{n} E\left[X_{\gamma}^{q} \prod_{\ell = 1}^{\length{p}} X^{p_\ell}_{\alpha_\ell} \right]. \label{eq:genericC-beta-0}
\end{align}
\end{subequations}
First, we examine the term in Eq.~\eqref{eq:genericC-beta-q}. We must write this in terms of sums which extend to $n - 1$ rather than $n$ to compare with Eq.~\eqref{eq:genericC-def}. To see how this is done, we note that each sum $\sum_{\alpha_i  =0}^{n}$ can be expanded to two kinds of terms: one when $\alpha_i = n$ and one for $\sum_{\alpha_i  =0}^{n-1}$. Therefore, we can write all the sums as a single sum over every possible binary vector of length $\length{p}$ such that a $1$ in the $\ell^\text{th}$ entry means that $\alpha_\ell = n$ and a $0$ means $\sum_{\alpha_\ell  =0}^{n - 1}$. Consider such a binary vector $v$; the contribution to the total sum from $v$ is 
\begin{equation}
    E\left[ X_{n}^q \prod_{p_\ell \in \parts{p}} X^{p_\ell}_{\alpha_\ell} \right]_{v} = \sum_{\ell: v_\ell = 0} \sum_{\alpha_\ell = 0}^{n - 1} E\left[ X_{n}^{q + \sum_{i:v_i = 1} p_{i} } \prod_{i: v_i = 0} X^{p_\ell}_{\alpha_\ell} \right] 
\end{equation}
The sum over all such $v$ can be equivalently expressed as a sum over every possible subset of $\parts{p}$ (including the empty set) so that the term in Eq.~\eqref{eq:genericC-beta-q} can be written as
\begin{equation} \label{eq:genericC-beta-q-closed}
    \sum_{\parts{b} \subseteq \parts{p}} C_{n}^{(m)} \left(q + \size{b} ; \parts{p} - \parts{b} \right). 
\end{equation}
The term in Eq.~\eqref{eq:genericC-beta-else} is similar except for the sum over $\gamma$. By treating $q - \beta$ as a part, we can write it as 
\begin{equation} \label{eq:genericC-beta-else-closed}
    \sum_{\beta = 1}^{q - 1} \binom{q}{\beta} \sum_{\parts{b} \subseteq \parts{p} + [q - \beta ]} \overline{C}_{n}^{(m)} \left(\beta + \size{b} ; \parts{p} + [q - \beta] - \parts{b} \right),
\end{equation}
with reference to the notation of Eq.~\eqref{eq:recurr-notation}. The term in Eq.~\eqref{eq:genericC-beta-0} requires more attention. Temporarily ignoring the factor of $1/(n+1)$, we will write
\begin{subequations}
\begin{align}
    D_{n}^{(m)}(q; \parts{p}) &= \sum_{\alpha_1 = 0}^{n} \sum_{\alpha_2 = 0}^{n} \cdots \sum_{\alpha_{\length{p}} = 0}^{n} \sum_{\gamma = 0}^{n} E\left[X_{\gamma}^{q} \prod_{\ell = 1}^{\length{p}} X^{p_\ell}_{\alpha_\ell} \right] \\ 
    &=  \sum_{\substack{\parts{b} \subseteq \parts{p} + [q ] \\ \parts{b} \neq \emptyset}}C_{\ell}^{(m)} \left(\size{b} ; \parts{p} + [q] - \parts{b} \right) + D_{n - 1}^{(m)}(q; \parts{p}). \label{eq:D_sum}
\end{align}
\end{subequations}
In Eq.~\eqref{eq:D_sum}, the $\parts{b} = \emptyset$ term has been separated; note that $\parts{b} = \emptyset$ corresponds to the case where no $\alpha_i$ is equal to $n$ and hence there is no term of the form $X_n^\ell$ inside the expectation so no identification with Eq.~\eqref{eq:genericC-def} is possible. But then, Eq.~\eqref{eq:D_sum} is a linear recurrence relation for $D_{n}^{(m)}$ which can be directly solved to obtain
\begin{equation}
    D_{n}^{(m)}(q; \parts{p}) = \sum_{\ell = 0}^{n} \sum_{\substack{\parts{b} \subseteq \parts{p} + [q ] \\ \parts{b} \neq \emptyset}} C_{\ell}^{(m)} \left(\size{b} + [q] ; \parts{p} - \parts{b} \right),
\end{equation}
and so, by again referencing the notation of Eq.~\eqref{eq:recurr-notation}, the term in Eq.~\eqref{eq:genericC-beta-0} can be written as
\begin{equation}\label{eq:genericC-beta-0-closed}
    \sum_{\substack{\parts{b} \subseteq \parts{p} + [q ] \\ \parts{b} \neq \emptyset}} \hat{C}_{\ell}^{(m)} \left(\size{b} ; \parts{p} + [q] - \parts{b} \right).
\end{equation}
Collecting Eq.~\eqref{eq:genericC-beta-q-closed}, \eqref{eq:genericC-beta-else-closed} and \eqref{eq:genericC-beta-0-closed} gives Eq.~\eqref{eq:general-recur-result}.

\subsection{Proof of Lemma~\ref{lemma:S-properties}}
\label{sec:proof-S-properties}

To establish this result, we must investigate the properties of $\mathbf{C}_1, \mathbf{C}_2$ and $\mathbf{C}_3$. The sizes and entries of these matrices all depend on $m$, and the arguments in this section apply for any $m \geq 1$. As per Eq.~\eqref{eq:R-def}, the matrix $(\mathbf{I} - z \mathbf{C}_1)^{-1}$ will be important, we therefore define
\begin{equation} \label{eq:Q-def}
    \mathbf{Q} = \mathbf{I} - z \mathbf{C}_1,
\end{equation}
note that $\mathbf{Q}$ is lower triangular with diagonal entries $1 - z$ by Proposition.~\ref{prop:Q-lower-tri}. To understand $\mathbf{Q}^{-1}$, we will use the following lemma, which is a standard result from linear algebra for the inverse a triangular matrix. The proof of Proposition~\ref{prop:triangular-inverse} is in Appendix~\ref{sec:proof-extra}. 
\begin{proposition} \label{prop:triangular-inverse}
Let $X$ be an $n \times n$ lower triangular matrix,
\begin{equation}
    X = \begin{pmatrix}
    c_{1, 1} & 0 & 0 & \dots & 0 \\ 
    c_{2, 1} & c_{2, 2} & 0 & \dots & 0 \\ 
    c_{3, 1} & c_{3, 2} & c_{3, 3} & \dots & 0 \\
    \vdots & \vdots & \vdots & \ddots & \vdots\\ 
    c_{n, 1} & c_{n, 2} & c_{n, 3} & \dots & c_{n, n}
    \end{pmatrix},
\end{equation}
where $c_{i, i} > 0$ for each $1 \leq i \leq n$. Then, the entries $a_{i, j}$ of $X^{-1}$ obey
\begin{equation} \label{eq:triangular-inverse}
    a_{i, j} = \begin{cases}
    \frac{1}{c_{i, i}}\left(\delta_{i, j} - \sum_{k = 1}^{i - j} c_{i, i - k} a_{i - k, j}\right) & 1 \leq j \leq i \leq n \\ 
    0 & \text{otherwise}.
    \end{cases}
\end{equation}
\end{proposition}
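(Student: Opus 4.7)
The plan is to prove Proposition~\ref{prop:triangular-inverse} by first establishing that $X^{-1}$ is itself lower triangular, and then reading off the stated recurrence directly from the matrix identity $XX^{-1} = \mathbf{I}$.

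First I would verify that $X$ is invertible and that $X^{-1}$ is lower triangular. Since the diagonal entries $c_{i,i}$ are strictly positive, $\det X = \prod_{i=1}^{n} c_{i,i} > 0$, so $X$ is invertible. Lower triangularity of $X^{-1}$ follows from a standard argument: one can either induct on $n$ by performing blockwise inversion on the decomposition of $X$ into its top-left $(n-1)\times(n-1)$ block, its bottom-right scalar $c_{n,n}$, and its bottom row, or observe that the leading principal minors of $X$ are all nonzero so that the $LU$-factorization is unique and in this case trivial. Either way, one concludes $a_{i,j} = 0$ for $i < j$, which is the second case in Eq.~\eqref{eq:triangular-inverse}.

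Next I would extract the recurrence. The identity $XX^{-1} = \mathbf{I}$ reads entrywise as
\begin{equation}
\sum_{k=1}^{n} c_{i,k}\, a_{k,j} = \delta_{i,j}.
\end{equation}
Lower triangularity of $X$ gives $c_{i,k} = 0$ for $k > i$, and lower triangularity of $X^{-1}$ gives $a_{k,j} = 0$ for $k < j$. Hence for $1 \leq j \leq i \leq n$ the sum collapses to
\begin{equation}
c_{i,i}\, a_{i,j} + \sum_{k=j}^{i-1} c_{i,k}\, a_{k,j} = \delta_{i,j}.
\end{equation}
Re-indexing the remaining sum by $k \mapsto i - k$, so that $k$ runs from $1$ to $i-j$, and solving for $a_{i,j}$ using $c_{i,i} > 0$ yields
\begin{equation}
a_{i,j} = \frac{1}{c_{i,i}}\left(\delta_{i,j} - \sum_{k=1}^{i-j} c_{i,\,i-k}\, a_{i-k,\,j}\right),
\end{equation}
which is precisely Eq.~\eqref{eq:triangular-inverse}.

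There is no real obstacle here — the result is a direct consequence of the definition of matrix inversion together with the triangular structure. If any subtlety needs elaboration, it is the claim that $X^{-1}$ is lower triangular, which I would justify by a brief inductive block-inversion argument rather than invoking the $LU$ machinery, since the proof is being placed in an appendix of supplementary proofs.
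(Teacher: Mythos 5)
Your proof is correct. It takes a mildly different route from the paper's: the paper proves the formula by induction on $n$, performing Gaussian elimination on the augmented matrix $[X \mid \mathbf{I}]$ with the first $n-1$ rows already reduced, and reads the recurrence off from the row operations needed to clear the last row; lower triangularity of $X^{-1}$ falls out of the same induction. You instead establish lower triangularity of $X^{-1}$ separately (by block inversion or uniqueness of the $LU$ factorization) and then obtain the recurrence directly from the entrywise identity $\sum_{k} c_{i,k} a_{k,j} = \delta_{i,j}$, collapsing the sum using the triangular structure and re-indexing. Your version is arguably more transparent, since it requires no bookkeeping of elimination steps and makes it obvious that the recurrence is just the $(i,j)$ entry of $XX^{-1} = \mathbf{I}$ solved for $a_{i,j}$; the paper's version has the minor advantage of proving invertibility, triangularity of the inverse, and the recurrence in a single induction. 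Both are complete and appropriate for an appendix-level standard fact.
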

We will elaborate on the interpretation of Proposition~\ref{prop:triangular-inverse} in terms of the indices $(q; \parts{p})$. In particular, consider the sum appearing in Eq.~\eqref{eq:triangular-inverse}, 
\begin{equation} \label{eq:triangular-array-sum-color}
    \sum_{k = 1}^{i - j} c_{i, i - k} a_{i - k, j}.
\end{equation}
The term $c_{i, i - k}$ relates to the dependence of $(q; \parts{p})_i$ in the canonical ordering on $(q; \parts{p})_{i - k}$. The term $a_{i - k, j}$ relates to the dependence of the $(q; \parts{p})_{i - k}$ on $(q; \parts{p})_j$, albeit in in the inverse of the matrix. The sum is shown schematically for a particular term in the $m = 4$ case in Fig.~\ref{fig:triangular-array-sum-graph}.
\begin{figure}[ht]
\centering
\begin{tikzpicture}
\node[] at (-58mm,0mm) {$(4, \emptyset)$};
% \draw[black, thick] (-70mm,0mm) rectangle (4.1mm);
% \draw (LLbl) rectangle (LLtr);

\node[] at (-40mm,0mm) {$(3, \{1\})$};
\node[] at (-40mm,-7mm) {$j = 2$};
\node[] at (-40mm,-12mm) {$k = 5$};
% \draw[black, thick] (-50mm,0mm) circle (4.1mm);
% \draw (Lbl) rectangle (Ltr);

\node[] at (-20mm,0mm) {$(2, \{2\})$};
\node[] at (-20mm,-12mm) {$k = 4$};
% \draw[black, thick] (-30mm,0mm) circle (4.1mm);
% \draw (Mbl) rectangle (Mtr);

\node[] at (0mm,0mm) {$(1, \{3\})$};
\node[] at (0mm,-12mm) {$k = 3$};
% \draw[black, thick] (30mm,0mm) circle (4.1mm);
% \draw (RRbl) rectangle (RRtr);

\node[] at (20mm,0mm) {$(2, \{1,1\})$};
\node[] at (20mm,-12mm) {$k = 2$};
% \draw[black, thick] (0mm,0mm) circle (4.1mm);
% \draw (Rbl) rectangle (Rtr);

\node[] at (42mm,0mm) {$(1, \{2,1\})$};
\node[] at (42mm,-12mm) {$k = 1$};
% \draw[black, thick] (50mm,0mm) circle (4.1mm);
% \draw (RRbl) rectangle (RRtr);

\node[] at (67mm,0mm) {$(1, \{1,1,1\})$};
\node[] at (67mm,-7mm) {$i = 7$};
% \draw[black, thick] (70mm,0mm) circle (4.1mm);
% \draw (RRbl) rectangle (RRtr);

\draw[-{Latex[length=2mm, width=2mm]}, black, very thick] (67mm,3mm) to[out=160,in=20] (20mm,3mm);
\node[] at (42mm,10mm) {$c_{i, i - 2}$};

\draw[-{Latex[length=2mm, width=2mm]}, black, very thick] (20mm,3mm) to[out=160,in=30] (-40mm,3mm);
\node[] at (-12mm,7mm) {$a_{i - 2, j}$};

\end{tikzpicture}
\caption{In the $m = 4$ case, the terms contributing to Eq.~\eqref{eq:triangular-array-sum-color} for the calculation of $a_{6, 2}$. The sum contains five terms, one for each $1 \leq k \leq 5$. The $k = 2$ term is shown in this diagram; the arrows show how the indices $i, j$ are related to $(q; \parts{p})$ indices in the canonical ordering.}
\label{fig:triangular-array-sum-graph}
\end{figure}
We are now ready to study the inverse of $\mathbf{Q}$.
\begin{proposition}
\label{prop:Q-properties}
For the matrix $\mathbf{Q}$ defined in Eq.~\eqref{eq:Q-def}, it holds for all allowed indices $i, j$ and for $0 \leq z < 1$ that
\begin{enumerate}[label=(\roman*), leftmargin=* ]
    \item $\mathbf{Q}_{i, j}^{-1} = 0$ if and only if $\mathbf{Q}_{i, j} = 0$ \label{eq:Q-prop-1}
    \item $\mathbf{Q}_{i, j}^{-1} \geq 0$ \label{eq:Q-prop-2}
    \item If $\mathbf{Q}_{i, j}^{-1} \neq 0$, then $\mathbf{Q}_{i, j}^{-1} = (1 - z)^{-(\lengthn{p}{i} - \lengthn{p}{j} + 1)} f_{i, j}(z)$ where $f_{i, j}(z)$ is a polynomial in $z$ such that $f_{i, j} \neq 0$. \label{eq:Q-prop-3}
\end{enumerate}
\end{proposition}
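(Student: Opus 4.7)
The plan is to apply Proposition~\ref{prop:triangular-inverse} to $\mathbf{Q} = \mathbf{I} - z\mathbf{C}_1$ and interpret the resulting recursion combinatorially as a sum over chains in the canonical ordering. Since $\mathbf{Q}_{i,i} = 1-z$ and $\mathbf{Q}_{i, i-k} = -z(\mathbf{C}_1)_{i, i-k}$ for $k \geq 1$, Proposition~\ref{prop:triangular-inverse} yields the one-parameter recursion
\begin{equation*}
    \mathbf{Q}^{-1}_{j,j} = \frac{1}{1-z}, \qquad \mathbf{Q}^{-1}_{i,j} = \frac{z}{1-z}\sum_{k=1}^{i-j} (\mathbf{C}_1)_{i, i-k}\, \mathbf{Q}^{-1}_{i-k, j} \quad (i > j).
\end{equation*}
Iterating expresses $\mathbf{Q}^{-1}_{i,j}$ as a sum, indexed by chains $i = i_0 > i_1 > \cdots > i_r = j$ in the canonical ordering with $(\mathbf{C}_1)_{i_{s-1}, i_s} \neq 0$ for each $s$, of contributions $\tfrac{z^r}{(1-z)^{r+1}}\prod_{s}(\mathbf{C}_1)_{i_{s-1}, i_s}$. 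A direct reading of Eq.~\eqref{eq:general-recur-result-top} reveals the combinatorial meaning of these entries: $(\mathbf{C}_1)_{i,k}$ is a $0$--$1$ entry, equal to $1$ precisely when $\parts{p}_k$ is a sub-multiset of $\parts{p}_i$ (the matching value of $q$ is then forced by the invariant $q + \size{p} = m$).

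Given this, part (ii) is immediate, because every factor in every chain contribution is non-negative for $0 \leq z < 1$. For part (i), I would use that the sub-multiset relation is transitive, so the existence of any chain from $i$ to $j$ is equivalent to $(\mathbf{C}_1)_{i,j} \neq 0$ on its own; thus $\mathbf{Q}^{-1}_{i,j}$ is nonzero as a rational function of $z$ exactly when $\mathbf{Q}_{i,j}$ is, in the off-diagonal case, while the diagonal case is handled by $\mathbf{Q}_{i,i} = 1-z$ and $\mathbf{Q}^{-1}_{i,i} = 1/(1-z)$ both being nonzero on $[0,1)$.

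Part (iii) is the heart of the proposition. The key observation is that each step $i_{s-1} \to i_s$ removes a nonempty sub-multiset $\parts{b}_s$ from $\parts{p}_{i_{s-1}}$, so the chain length $r$ is bounded by $\lengthn{p}{i} - \lengthn{p}{j}$, with equality iff every $\parts{b}_s$ consists of a single part. Setting $r_{\max} = \lengthn{p}{i} - \lengthn{p}{j}$ and factoring $(1-z)^{-(r_{\max}+1)}$ out of the chain expansion gives
\begin{equation*}
    \mathbf{Q}^{-1}_{i,j} = \frac{f_{i,j}(z)}{(1-z)^{r_{\max}+1}}, \qquad f_{i,j}(z) = \sum_{\text{chains}} z^r (1-z)^{r_{\max} - r}\prod_{s=1}^{r}(\mathbf{C}_1)_{i_{s-1}, i_s},
\end{equation*}
which is manifestly a polynomial in $z$. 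To rule out $f_{i,j} \equiv 0$, I would evaluate at $z = 1$: only chains of maximal length $r_{\max}$ survive, and whenever any chain from $i$ to $j$ exists, at least one single-part-removal chain does too (obtained by removing the parts of $\parts{p}_i - \parts{p}_j$ one at a time in any order), so $f_{i,j}(1)$ is a positive integer.

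The main obstacle is the bookkeeping required to verify that each intermediate label visited by a single-part-removal chain is actually a valid element of $\mathcal{C}^{(m)}$. This reduces to checking that $q$ stays in $[1,m]$ and that the running partition remains in $\mathcal{P}(m-q)$ throughout, both of which follow automatically from the invariant $q + \size{p} = m$ being preserved at every step together with the positivity of parts in $\mathcal{M}_{\geq 1}$.
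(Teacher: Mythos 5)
Your argument is correct in substance and rests on the same two pillars as the paper's proof: the recursion of Proposition~\ref{prop:triangular-inverse} applied to $\mathbf{Q} = \mathbf{I} - z\mathbf{C}_1$, and the characterization of the nonzero entries of $\mathbf{C}_1$ as sub-multiset containments $\parts{p}_j \subseteq \parts{p}_i$ (with $q$ forced by the invariant $q + \size{p} = m$). The difference is organizational: the paper runs an induction on rows separately for each of (i)--(iii), whereas you unroll the recursion into a closed-form sum over chains $i = i_0 > \cdots > i_r = j$, each contributing $z^r(1-z)^{-(r+1)}\prod_s (\mathbf{C}_1)_{i_{s-1}, i_s}$. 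This buys a genuinely cleaner proof of part (iii): the bound $r \le \lengthn{p}{i} - \lengthn{p}{j}$ (each step strips a nonempty sub-multiset) together with the existence of at least one maximal single-part-removal chain makes the exact pole order and the positivity $f_{i,j}(1) > 0$ manifest, where the paper's inductive treatment of (iii) is comparatively delicate. One factual slip: the entries of $\mathbf{C}_1$ are not $0$--$1$. The sum over $\parts{b} \subseteq \parts{p}$ in Eq.~\eqref{eq:general-recur-result-top} counts sub-multisets with multiplicity --- see the coefficient $2$ of $a_n$ in Eq.~\eqref{eq:mu3-cn}, and in general entries as large as $\binom{m}{\lfloor m/2 \rfloor}$ occur. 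This does not damage your argument, since all you actually use is that the entries are non-negative integers that are nonzero exactly when $\parts{p}_j \subseteq \parts{p}_i$, but the claim should be corrected.
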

\begin{proof}
By Proposition~\ref{prop:Q-lower-tri}, we can apply the result of Proposition~\ref{prop:triangular-inverse} to the entries of $\mathbf{Q}^{-1}$.

We will begin by showing \eqref{eq:Q-prop-1} by induction on the rows of $\mathbf{Q}^{-1}$. When there are two rows we have
\begin{equation} \label{eq:Q-sim-base}
\mathbf{Q}^{-1} = 
    \begin{pmatrix}
    1 - z & 0 \\ 
    -Q_{1, 2} & 1- z
    \end{pmatrix}^{-1} 
    =
     \begin{pmatrix}
    (1 - z)^{-1} & 0 \\ 
    Q_{1, 2} (1 - z)^{-2} & (1 - z)^{-1}
    \end{pmatrix}.
\end{equation}
which provides the base case for all the assertions of Lemma~\ref{lemma:S-properties} since $\mathbf{Q}_{1, 2} > 0$ via the fact that every $\parts{p}$ has the empty set as a multisubset. Proposition~\ref{prop:triangular-inverse}, shows directly that $\mathbf{Q}_{i, j}^{-1} = 0$ for $j > i$ and  $\mathbf{Q}_{i, i}^{-1} = 1/(1-z)$, hence we will restrict our attention to the case $j < i$. On the induction step, we have
\begin{equation} \label{eq:Qsum}
    \mathbf{Q}_{n, j}^{-1} =  -\frac{1}{1 - z} \sum_{k = 1}^{n - j} \mathbf{Q}_{n, n - k} \mathbf{Q}^{-1}_{n - k, j}.
\end{equation}
Suppose first that $\mathbf{Q}_{n, j} = 0$; we will examine the $k^\text{th}$ term in the sum in Eq.~\ref{eq:Qsum}.  If $\mathbf{Q}_{n, n - k} = 0$ in Eq.~\ref{eq:Qsum} then that term contributes zero to the total sum. If $\mathbf{Q}_{n, n - k} \neq 0$, it means that $\parts{p}_{n - k} \subset \parts{p}_{n}$. But then, since $\mathbf{Q}_{n, j} = 0$ we have that $\parts{p}_{j} \not\subset \parts{p}_{n}$ which implies that $\parts{p}_{j} \not \subset \parts{p}_{n - k}$ and hence that $\mathbf{Q}_{n - k, j} = 0$. By the induction hypothesis, this implies that $\mathbf{Q}^{-1}_{n - k, j} = 0$ and so every term in the sum is zero and we have $\mathbf{Q}_{n, j} = 0 \implies \mathbf{Q}^{-1}_{n, j} = 0$. Then we can apply Lemma~\ref{prop:triangular-inverse} with the roles of $\mathbf{Q}$ and $\mathbf{Q}^{-1}$ switched and the same argument above to obtain $\mathbf{Q}_{n, j}^{-1} = 0 \implies \mathbf{Q}_{n, j} = 0$. This completes the induction and the proof of \eqref{eq:Q-prop-1}. 

To establish \eqref{eq:Q-prop-2}, we perform a similar induction. Since $\mathbf{C}_1 \geq 0$, we have $\mathbf{Q}_{i, j} < 0$ for $j < i$ and $1 \leq z < 1$. The induction hypothesis provides that the sum in Eq.~\ref{eq:Qsum} contains only non-positive terms and so $\mathbf{Q}^{-1}_{n, j}$ itself is non-negative.

Finally, we establish \eqref{eq:Q-prop-3} by again using similar induction. Since $\mathbf{Q}_{n, j}^{-1} \neq 0$ by assumption, \eqref{eq:Q-prop-1} provides that $\parts{p}_{j} \subset \parts{p}_{n}$. The multiset $\parts{p}_{n} - \parts{p}_{j}$ contains at least one element, hence by the requirement of the canonical ordering that generating functions be sorted by ascending $\length{p}$, there is a $1 \leq k \leq n - j$ such that $\parts{p}_{j} \subseteq \parts{p}_{n-k} \subset \parts{p}_{n}$ and $|\parts{p}_{n-k}| = |\parts{p}_{n}| - 1$. This implies that $\mathbf{Q}_{n, n-k} > 0$ and so by the induction hypothesis, the contribution to $\mathbf{Q}_{n, j}^{-1}$ in Eq.~\ref{eq:Qsum} from this term is
\begin{equation} \label{eq:Q3-int}
    - \frac{1}{1 - z} \left[\frac{\mathbf{Q}_{n, n-k} f_{n - k, k}(z)}{(1 - z)^{(\lengthn{p}{n} - 1) - \lengthn{p}{j}  + 1}} \right] = \frac{g_{n - k, k}(z)}{(1-z)^{\lengthn{p}{n} - \lengthn{p}{j}  + 1}},
\end{equation}
where $g_{n - k, k}(z)$ is a polynomial in $z$ such that $g_{n - k, k}(1) \neq 0$. By the same argument, for each multisubset of $\parts{p}_{n} - \parts{p}_{j}$, there will be a $1 \leq k \leq n - j$ where such that $\parts{p}_{j} \subseteq \parts{p}_{n-k} \subset \parts{p}_{n}$ whose contribution to $\mathbf{Q}_{n, j}^{-1}$ has the same form as Eq.~\eqref{eq:Q3-int}. But since $|\parts{p}_{n-k}| \leq |\parts{p}_{n}| - 1$, the exponent of $(1 - z)$ is at most $\lengthn{p}{n} - \lengthn{p}{j}  + 1$. Therefore, $\mathbf{Q}_{n, j}^{-1}$ is of the required form which completes the induction and the proof.
\end{proof}
Referring to Eq.~\eqref{eq:P-def}, Proposition~\ref{prop:Q-properties} implies that we can write
\begin{equation} \label{eq:B1-def}
    \mathbf{P}^{-1}(z) \mathbf{Q}^{-1}(z) \mathbf{P}(z) = \frac{1}{1 - z} \mathbf{B}_1 + \mathbf{B}_2(z),
\end{equation}
where $\mathbf{B}_1$ is a matrix with non-negative entries arranged in a structure identical to $\mathbf{C}_1$ and $\mathbf{B}_2(z)$ is a matrix whose entries are polynomials in $(1 - z)$. Define 
\begin{equation} \label{eq:D-def}
\mathbf{D}(z) = \mathbf{P}^{-1}(z) \left[\mathbf{C}_1 + \mathbf{C}_2 + \frac{1}{1-z} \mathbf{C}_3 \right] \mathbf{P}(z),
\end{equation}
then we can write 
\begin{equation}
    \mathbf{P}^{-1}(z) \mathbf{R}(z) \mathbf{P}(z) = \left(\frac{1}{1 - z} \mathbf{B}_1 + \mathbf{B}_2(z)\right) \mathbf{D}(z)
\end{equation}
We will now study the matrix $\mathbf{D}.$ The effect of a similarity transformation by $\mathbf{P}(z)$ on a matrix with entries $(a_{i, j})$ is to transform the entries according to 
\begin{equation}
    a_{i, j} \to  a_{i, j} (1 - z)^{\lengthn{p}{i} - \lengthn{p}{j} }.
\end{equation}
From Eq.~\eqref{eq:general-recur-result}, and by the same arguments as in the proof of Proposition~\ref{prop:Q-lower-tri}, if entry $(i, j)$ in $\mathbf{C}_2$ is nonzero, then $\lengthn{p}{i} - \lengthn{p}{j} \geq -1$. Similarly, if entry $(i, j)$ in $\mathbf{C}_3$ is nonzero, then $\lengthn{p}{i} - \lengthn{p}{j} \geq 0$. Hence, we can write
\begin{equation} \label{eq:E-def}
\mathbf{D}(z) = \frac{1}{1 - z} \mathbf{E} + \mathbf{F}(z)
\end{equation}
where the entries $\mathbf{F}(z)$ are polynomials in $(1 - z)$ and the entries of $\mathbf{E}$ are obtained by restricting the sum in Eq.~\eqref{eq:general-recur-result-bar} to the term $\parts{b} = \emptyset$ and by restricting the sum in Eq.~\eqref{eq:general-recur-result-hat} to terms where $\length{b} = 1$. Explicitly, in a row indexed by $(q; \parts{p})$, the nonzero entries are either indexed by $(\beta; \parts{p} + [q - \beta])$ for some $1 \leq \beta \leq q - 1$ or by $(p; \parts{p} - [p])$ where $[p] \subseteq \parts{p} + [q]$. Note that 
\begin{equation}
    \mathbf{P}^{-1}(z) \frac{\text{d}}{\text{d} z} \mathbf{P} = \frac{1}{1-z} \text{diag}\{\lengthn{p}{1}, \lengthn{p}{2}, \dots \lengthn{p}{\mathfrak{C}(m)}\} \defn \frac{1}{1-z} \mathbf{G},
\end{equation}
hence by referring to Eqs.~\eqref{eq:P-def}, and making the substitution $u = 1/(1-z)$ we can now write 
\begin{equation}
    h'(u) = \left[\mathbf{B}_1 \mathbf{E} + \frac{1}{u}\left(\mathbf{B}_1 \mathbf{F}(u) + \mathbf{B}_2(u)\mathbf{E}  - \mathbf{G} \right) + \frac{1}{u^2} \mathbf{B}_2(u)\mathbf{F(u)} \right]  h(u),
\end{equation}
where $\mathbf{F}(u)$ and $\mathbf{B}_2$ can each be written as a finite sum of constant matrices multiplied by non-negative powers of $1/u$. Hence, to complete the proof of Lemma~\ref{lemma:S-properties} we need only establish that $\mathbf{B}_1 \mathbf{E}$ is a primitive matrix with the appropriate bounds on its Perron-Frobenius eigenvalue, which is the subject of the following proposition. Then, Lemma~\ref{lemma:S-properties} holds with $\mathbf{M}_0 = \mathbf{B}_1 \mathbf{E}$.
\begin{proposition}
    The matrix $\mathbf{B}_1 \mathbf{E}$ is primitive, where $\mathbf{B}_1$ is defined in Eq.~\eqref{eq:B1-def} and $\mathbf{E}$ is defined in Eq.~\eqref{eq:E-def}. The Perron-Frobenius eigenvalue of $\mathbf{B}_1 \mathbf{E}$ is bounded above by $m^2 2^m e^{6 \sqrt{m}}$.
\end{proposition}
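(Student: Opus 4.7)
For primitivity, I separate aperiodicity from strong connectivity. Both $\mathbf{B}_{1}$ and $\mathbf{E}$ have unit diagonals, so $\mathbf{B}_{1}\mathbf{E}$ has a self-loop at every vertex. Indeed, writing $\mathbf{C}_{1}=\mathbf{I}+\mathbf{N}$ with $\mathbf{N}$ strictly lower triangular (hence nilpotent) gives the finite expansion $\mathbf{Q}^{-1}=u\sum_{k\ge 0}(u-1)^{k}\mathbf{N}^{k}$, and reading off the $u$-leading term of $\mathbf{P}^{-1}\mathbf{Q}^{-1}\mathbf{P}$ yields $(\mathbf{B}_{1})_{ij}=(\mathbf{N}^{\lengthn{p}{i}-\lengthn{p}{j}})_{ij}$, so $(\mathbf{B}_{1})_{ii}=1$. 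The singleton $\parts{b}=[q]$ contribution in \eqref{eq:general-recur-result-hat}, which maps $(q;\parts{p})\mapsto(q;\parts{p})$, gives $(\mathbf{E})_{ii}=1$. For strong connectivity, from any $(q;\parts{p})$ the $\mathbf{B}_{1}$-transition indexed by $\parts{b}=\parts{p}$ reaches the ``sink'' state $(m;\emptyset)$ in a single $\mathbf{B}_{1}$-step, and a straightforward induction on the number of parts shows that $k$ iterations of $\mathbf{B}_{1}\mathbf{E}$ starting from $(m;\emptyset)$ (with the $\mathbf{B}_{1}$ factor chosen as the identity and $\mathbf{E}$ as a split or a swap) reach every state $(q_{j};\parts{p}_{j})$ with $\lengthn{p}{j}\le k$. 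Since $\lengthn{p}{j}\le m-1$ for every valid index, $(\mathbf{B}_{1}\mathbf{E})^{m}$ is strictly positive.

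For the eigenvalue bound I will use $\lambda_{\mathrm{PF}}(A)\le\|A\|_{\infty}\le\dim(A)\cdot\max_{i,j}A_{ij}$ combined with entry estimates. The dimension is controlled by the Hardy--Ramanujan inequality $p(n)\le e^{\pi\sqrt{2n/3}}\le e^{3\sqrt{n}}$, yielding $\mathfrak{C}(m)=\sum_{q=1}^{m}p(m-q)\le m\,e^{3\sqrt{m}}$. For the entries, $(\mathbf{B}_{1}\mathbf{E})_{ij}=\sum_{k}(\mathbf{B}_{1})_{ik}(\mathbf{E})_{kj}$ is bounded by three observations: (a) each factor $(\mathbf{E})_{kj}$ is a binomial coefficient $\binom{q_{k}}{\beta}\le 2^{m}$ or else $1$; (b) inverting the split and swap rules in \eqref{eq:general-recur-result-bar}--\eqref{eq:general-recur-result-hat} shows that for fixed $j$ only $O(m)$ values of $k$ give a nonzero $(\mathbf{E})_{kj}$, because $k$ is determined from $j$ up to one choice of part to invert; and (c) $(\mathbf{B}_{1})_{ik}$ counts orderings of removing the multiset $\parts{p}_{i}-\parts{p}_{k}$ one element at a time, and is therefore at most $|\parts{b}|!$ with $|\parts{b}|\le\sqrt{2m}$, since any strictly decreasing chain of sub-partitions of a partition of size at most $m$ removes distinct part values whose sum is bounded by $m$, forcing chain length $\ell$ to satisfy $\ell(\ell+1)/2\le m$. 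Multiplying the three estimates produces the claimed bound $m^{2}\,2^{m}\,e^{6\sqrt{m}}$.

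The principal obstacle is step (c): the multinomial bound $|\parts{b}|!\le(\sqrt{2m})!$ has asymptotic shape $e^{O(\sqrt{m}\log m)}$, which just barely fits inside $e^{3\sqrt{m}}$ for large $m$, so the stated estimate leaves little room. To absorb the logarithmic slack cleanly, two refinements come to mind. First, partitions achieving the maximum multinomial (those with all distinct parts) simultaneously admit the fewest multisubsets, namely $\prod_{s}(\mu_{s}+1)=2^{\lengthn{p}{i}}$ when all multiplicities equal $1$, so an interpolation between the ``distinct'' and ``repeated'' regimes should uniformly limit the row sum of $\mathbf{B}_{1}$. Alternatively, applying Collatz--Wielandt with the weighted positive test vector $v_{(q;\parts{p})}=\prod_{s}\mu_{s}!$ would cancel the multinomial denominators in $(\mathbf{B}_{1})_{ik}$ against their numerators and yield an $e^{O(\sqrt{m})}$ estimate on $\lambda_{\mathrm{PF}}$ directly. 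All other ingredients --- the primitivity argument, the Hardy--Ramanujan estimate for $\mathfrak{C}(m)$, and the $O(m)$ column-count bound in (b) --- are essentially mechanical.
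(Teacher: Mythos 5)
Your primitivity argument is sound and essentially the paper's, with two genuine improvements in presentation: you handle aperiodicity explicitly via the unit diagonals of $\mathbf{B}_1$ and $\mathbf{E}$ (the paper only argues strong connectivity, leaving the self-loops implicit), and your identity $(\mathbf{B}_1)_{ij} = (\mathbf{N}^{\lengthn{p}{i}-\lengthn{p}{j}})_{ij}$ is a correct and useful closed form that the paper obtains only indirectly through the triangular-inverse recursion of Proposition~\ref{prop:triangular-inverse}.

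The eigenvalue bound, however, has a genuine gap, and it is exactly where you suspected. Step (c) rests on the claim that a strictly decreasing chain of sub-partitions removes \emph{distinct} part values, forcing $\ell(\ell+1)/2 \le m$. This is false: the removed parts may repeat. Take $i$ indexed by $(1;[1,1,\dots,1])$ with $m-1$ ones and $j$ indexed by $(m;\emptyset)$. The chain removes $m-1$ equal parts, so $\ell = m-1$, and your own formula gives $(\mathbf{B}_1)_{ij} = (\mathbf{N}^{m-1})_{ij} = (m-1)\cdot(m-2)\cdots 1 = (m-1)!$ (each single-part removal carries the multiplicity of the removed part as its $\mathbf{C}_1$-coefficient). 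In general $(\mathbf{B}_1)_{ij} = (\lengthn{p}{i}-\lengthn{p}{j})!$ whenever it is nonzero, which is $e^{\Theta(m\log m)}$ in the worst case and cannot be absorbed into $m^2 2^m e^{6\sqrt{m}}$ by any count of nonzero columns. Relatedly, your remark that $(\sqrt{2m})! = e^{O(\sqrt{m}\log m)}$ ``just barely fits inside $e^{3\sqrt{m}}$'' is backwards --- it does not fit for large $m$ --- and the two proposed repairs (the distinct/repeated interpolation and the weighted Collatz--Wielandt test vector $v_{(q;\parts{p})} = \prod_s \mu_s!$) are only sketched, not carried out; the second is the one that could plausibly cancel the factorials, but as written the bound is not established. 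You should also be aware that the constant in the proposition statement is not what the paper itself proves: its proof bounds the entries of $\mathbf{B}_1$ by $2^{m e^{3\sqrt{m}}}$ via the triangular-inverse induction and concludes only $\lambda \le 2^{(m/2+1)e^{3\sqrt{m}}}$, which is the bound actually quoted in Lemma~\ref{lemma:S-properties} and Theorem~\ref{thm:main}. So the target $m^2 2^m e^{6\sqrt{m}}$ is strictly stronger than anything proved in the paper, and your argument does not reach it either.
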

\begin{proof}
    To show that $\mathbf{B}_1 \mathbf{E}$ is primitive we will show that the adjacency graph defined by $\mathbf{B}_1 \mathbf{E}$ with vertices labelled by the $(q, \parts{p})_i$ is strongly connected. Recall that the entries of $\mathbf{E}$ imply that $(q; \parts{p})$ can reach either $(\beta; \parts{p} + [q - \beta])$ for some $1 \leq \beta \leq q - 1$ or by $(p; \parts{p} - [p])$ where $[p] \subseteq \parts{p} + [q]$ and the entries of $\mathbf{B}_1$ imply that $(q; \parts{p})$ can reach $(q + \size{b}, \parts{p} - \parts{b})$ for any $\parts{b} \subseteq \parts{p}$. To demonstrate that the associated adjacency graph is connected, we will show that there is a bidirectional path between any $(q; \parts{p})$ and $(m, \emptyset)$. Suppose first that we are starting at some $(q; \parts{p})$ and want to find a path to $(m; \emptyset)$. Then, $\mathbf{E}$ provides a path from $(q; \parts{p})$ to itself and $\mathbf{B}_1$ provides a path from $(m; \emptyset)$. Now we would like to find a path starting from $(m; \emptyset)$ back to $(q; \parts{p})$. First, $\mathbf{E}$ provides a path from $(m; \emptyset)$ to $(m - p_1; [p_1])$ then $\mathbf{B}_1$ provides a path from $(m - p_1; [p_1])$ to itself. Iterating this procedure, $\mathbf{E}$ provides a path from $(m - p_1, [p_1])$ to $(m - p_1 - p_2, [p_1 , p_2])$ and $\mathbf{B_1}$ provides a path from $(m - p_1 - p_2, [p_1 , p_2])$ to itself. It's clear that that there is a $1 \leq k \leq \mathfrak{C}(m)$ such that $q = m - \sum_{\ell = 1}^{k} p_{\ell}$ and hence there will be a path from $(m; \emptyset)$ to $(q ; \parts{p})$ as required. This completes the proof that the matrix $\mathbf{B}_1 \mathbf{E}$ is primitive.

    Now we will establish the appropriate bounds on the Perron-Frobenius eigenvalue $\lambda$ of $\mathbf{B}_1 \mathbf{E}$. To do this, we will use the fact that $\lambda$ is bounded above by the maximum sum of the columns of $\mathbf{B}_1 \mathbf{E}$. By Lemma~\ref{lemma:general-recur}, the largest entry that can appear in $\mathbf{C}_1$ is $\binom{m}{\lfloor m/2 \rfloor} \leq 2^m $. Since $\mathfrak{C}(m) \leq e^{3 \sqrt{m}}$ \cite{gupta1942asymptotic}, Proposition~\ref{prop:triangular-inverse}, implies via induction that the largest entry of $\mathbf{B}_1$ is bounded above by $2^{m e^{3 \sqrt{m}}}$. Similarly, the largest entry that can appear in $\mathbf{E}$ is $\binom{m}{\lfloor m/2 \rfloor} \leq 2^m$. It follows that each entry of $\mathbf{B}_1 \mathbf{E}$ is bounded above by $2^{(m + 1) e^{3 \sqrt{m}}} e^{3 \sqrt{m}}$ and hence that $\lambda$ is bounded above by $2^{\left(\tfrac{m}{2} + 1 \right) e^{3\sqrt{m}}}$.
\end{proof}

\subsection{Proof of Lemma~\ref{lemma:asyptotic-perron}}
\label{sec:proof-asymptotic-perron}

To prove Lemma~\ref{lemma:asyptotic-perron}, we will apply two theorems from the theory of perturbations of linear systems. The following theorem is originally due to Hartman and Wintner \cite{hartman1955asymptotic} and provides to us the required asymptotic form of the solutions to our differential equation system. We quote the variation appearing in \cite{pituk1999hartman}.
\begin{theorem}[\cite{pituk1999hartman}, Theorem 1]
\label{thm:hartman}
Let $\mathbf{A}$ be a constant $d \times d$ matrix and let $y(t) = (y_{i})_{1 \leq i \leq d}$ be a column vector. Consider the system of differential equations
\begin{equation} \label{eq:coppel1}
    y'(t) = \left(\mathbf{A} + \mathbf{B}(t)\right) y(t),
\end{equation}
where $\mathbf{B}(t)$ is a continuous $d \times d$ matrix for $t \geq 0$. Let $\lambda$ be a simple eigenvalue of $\mathbf{A}$ such that no other eigenvalue of $\mathbf{A}$ has the same real part as $\lambda$ and let $v$ be the eigenvector associated to this eigenvalue. If $B(t) \in L^2(t_0, \infty)$
then, for $t_0$ large enough, Eq.~\eqref{eq:coppel1} has a solution on $[t_0, \infty)$ such that 
\begin{equation}
    y(t) = \exp\left[\mu(t - t_0) + \int_{t_0}^{t}\delta(\tau)\,\text{d}\tau \right]\left[v + o(1)\right] \quad \text{as } t \to \infty,
\end{equation}
where
\begin{equation}
    \delta(t) = \frac{(\mathbf{B}(t) v) \cdot v}{v \cdot v}.
\end{equation}
\end{theorem}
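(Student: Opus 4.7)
The plan is to reduce the system to a standard diagonal form and then absorb the obstructive part of the perturbation into an explicit exponential prefactor, turning the remainder into a Levinson-type problem solvable by a contraction mapping. First I would choose a constant nonsingular matrix $T$ whose first column is $v$ and which block-diagonalizes $\mathbf{A}$ as $T^{-1}\mathbf{A}T = \mathrm{diag}(\lambda, \mathbf{A}_2)$; the existence of such $T$ uses only simplicity of $\lambda$. Setting $y = Tz$ yields $z' = (\Lambda + \tilde{\mathbf{B}}(t))z$ with $\Lambda = T^{-1}\mathbf{A}T$ and $\tilde{\mathbf{B}} = T^{-1}\mathbf{B}T \in L^2$, and after normalizing so that $v\cdot v = 1$ the function $\delta(t)$ in the statement becomes the $(1,1)$ entry $\tilde{\mathbf{B}}_{11}(t)$ of the perturbation.

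Next I would apply the Harris--Lutz (Riccati) substitution
\[
z(t) = \exp\!\left[\lambda(t-t_0) + \int_{t_0}^{t}\delta(\tau)\,d\tau\right] w(t).
\]
This explicit gauge change absorbs exactly the scalar diagonal piece of the perturbation that would otherwise preclude any $L^{1}$-type argument, and yields a new linear system
\[
w'(t) = \bigl(\Lambda - \lambda I + \mathbf{N}(t)\bigr)\, w(t),
\]
where $\mathbf{N}(t) = \tilde{\mathbf{B}}(t) - \delta(t)\, I$. Crucially, the $(1,1)$ entry of $\mathbf{N}$ vanishes identically, while the other diagonal entries $\tilde{\mathbf{B}}_{jj}-\delta$ remain in $L^2$ but are now separated from the first coordinate by a nonzero spectral gap. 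The remaining task is to exhibit a solution with $w(t)\to e_1$.

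I would cast this as a fixed-point problem using the exponential dichotomy provided by $\Lambda - \lambda I$. For an index $j$ with $\mathrm{Re}(\lambda_j)>\mathrm{Re}(\lambda)$ (stable) I would integrate from $t$ to $\infty$, for $\mathrm{Re}(\lambda_j)<\mathrm{Re}(\lambda)$ (unstable) from $t_0$ to $t$, and for the first component from $t_0$ to $t$ with zero initial perturbation. Variation of parameters then gives
\[
w(t) = e_1 + \int K(t,s)\, \mathbf{N}(s)\, w(s)\, ds,
\]
where the kernel $K(t,s)$ is bounded by $Ce^{-\gamma|t-s|}$ for some $\gamma>0$ coming from the spectral separation. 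Even though $\mathbf{N}\in L^2$ only, a Cauchy--Schwarz estimate against the exponentially decaying kernel shows that the induced integral operator on $L^\infty([t_0,\infty))$ has norm controlled by $\|\mathbf{N}\|_{L^2(t_0,\infty)}$, which tends to zero as $t_0\to\infty$. A Banach fixed point delivers a bounded $w$, and a further dominated-convergence argument, using the vanishing $(1,1)$ entry of $\mathbf{N}$ to eliminate the sole resonant contribution, yields $w(t) - e_1 \to 0$. Undoing the transformations $w\mapsto z\mapsto y = Tz$ then recovers the stated asymptotic form.

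The principal obstacle is precisely the weak $L^2$ (rather than $L^1$) integrability of $\mathbf{B}$: a direct Gr\"onwall or classical Levinson argument fails because the resonant diagonal entry $\tilde{\mathbf{B}}_{11}$ cannot be controlled by the spectral gap. The Harris--Lutz substitution is the crux of the proof: it converts this single obstruction into an explicit exponential factor, leaving a residual perturbation whose remaining $L^2$ entries are either off-diagonal (suppressed by the decaying dichotomy kernel) or non-resonant (suppressed by the spectral gap). In both cases Cauchy--Schwarz against the kernel reduces control to $\|\mathbf{N}\|_{L^2}$, which is exactly what the hypothesis provides, and the contraction closes.
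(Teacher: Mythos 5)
This result is not proved in the paper at all: it is an externally cited theorem, quoted verbatim from \cite{pituk1999hartman} (the text explicitly says ``We quote the variation appearing in \cite{pituk1999hartman}''). There is therefore no author proof to compare against; I will assess your sketch on its own terms. As a side note, the ``$\mu$'' in the quoted exponent is evidently a typographical slip for the eigenvalue $\lambda$; you use $\lambda$ correctly.

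Your overall architecture --- block-diagonalize $\mathbf{A}$ via the simplicity of $\lambda$, peel off the resonant scalar $\delta(t)=\tilde{\mathbf{B}}_{11}(t)$ with a Harris--Lutz gauge change so that $N_{11}\equiv 0$, and then run a Levinson-type fixed-point argument against the resulting exponential dichotomy --- is the standard route to the Hartman--Wintner theorem and is essentially correct. However, there is a genuine gap in the way you close the contraction. You claim that ``a Cauchy--Schwarz estimate against the exponentially decaying kernel shows that the induced integral operator on $L^\infty([t_0,\infty))$ has norm controlled by $\|\mathbf{N}\|_{L^2(t_0,\infty)}$.'' That is true for the non-resonant rows $j\ge 2$, where the spectral gap provides a kernel bounded by $Ce^{-\gamma|t-s|}$, but it fails for the resonant first row: there the variation-of-parameters kernel is a constant, and Cauchy--Schwarz produces
\[
\left|\int_{t_0}^{t}\sum_{k\ge 2}N_{1k}(s)\,w_k(s)\,ds\right|
\;\le\; \|N_{1\cdot}\|_{L^2(t_0,t)}\,\|(w_k)_{k\ge 2}\|_{L^2(t_0,t)},
\]
and the right-hand side is \emph{not} controlled by $\|w\|_{L^\infty}$ over an unbounded interval. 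The vanishing of $N_{11}$ does make $w_1$ depend only on the non-resonant components, so the resonance is in fact harmless --- but to make the argument close you must work in a Banach space that carries not just the sup norm but also an $L^2$ norm on the components $w_j$, $j\ge 2$ (equivalently, first verify via Young's inequality that the non-resonant output of your integral operator lands in $L^2$, and only then feed it into the resonant row). A bare $L^\infty$ contraction as written does not close. A smaller slip at the end: $w_1(t)$ converges to a constant $1+O(\|\mathbf{N}\|_{L^2(t_0,\infty)})$, not to $1$, so ``$w(t)-e_1\to 0$'' should be ``$w(t)\to c\,e_1$ with $c\neq 0$ for $t_0$ large''; this only rescales the solution and is harmless, but should be stated.
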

The following is Theorem 1.10.1 of \cite{eastham1989asymptotic} modified to our simpler case.
\begin{theorem}[\cite{eastham1989asymptotic}, Theorem 1.10.1]
\label{thm:eastham}
Let $\mathbf{J} = \kappa \mathbf{I} + \mu \mathbf{E}$ be an $n \times n$ matrix of Jordan type and let $\mathbf{R}(t)$ be an $n \times n$ matrix continuous on $[t_0, \infty)$. Consider the system
\begin{equation} \label{eq:jordan-sys}
    y'(t) = (J + R(t))y(t).
\end{equation}
Then, if $\mathbf{R}(t) \in L^2(t_0, \infty)$, Eq.~\eqref{eq:jordan-sys} has solutions $w_1(t), w_2(t) \dots w_n(t)$ such that for some constants $\alpha_i$,
\begin{equation}
    w_i(t) \sim t^{\alpha_i} e^{\kappa t} .
\end{equation}

\end{theorem}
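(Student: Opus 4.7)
The plan is to reduce to the case $\kappa = 0$ via a scalar exponential substitution and then apply a shearing transformation that splits the Jordan block into a diagonal leading matrix with distinct eigenvalues of order $1/t$; the final asymptotics then follow from a Levinson--Hartman--Wintner type asymptotic integration result applied to the resulting diagonal system.

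First I would substitute $y(t) = e^{\kappa t} z(t)$ into Eq.~\eqref{eq:jordan-sys}, which (since $\kappa \mathbf{I}$ commutes with everything) reduces the claim to showing that
\[
z'(t) = (\mu \mathbf{E} + \mathbf{R}(t)) z(t)
\]
admits solutions $z_i(t) \sim t^{\alpha_i}$; the case $\mu = 0$ is immediate. Assuming $\mu \ne 0$, I would apply the shearing $z(t) = \mathbf{S}(t) w(t)$ with $\mathbf{S}(t) = \operatorname{diag}(t^{n-1}, t^{n-2}, \ldots, t, 1)$. Direct computation gives $\mathbf{S}^{-1}(t)\mathbf{E}\mathbf{S}(t) = t^{-1}\mathbf{E}$ and $\mathbf{S}^{-1}(t)\mathbf{S}'(t) = t^{-1}\operatorname{diag}(n-1, \ldots, 1, 0)$, so the equation becomes
\[
w'(t) = \left(\tfrac{1}{t}\mathbf{A} + \mathbf{S}^{-1}(t)\mathbf{R}(t)\mathbf{S}(t)\right) w(t),
\]
with $\mathbf{A} = \mu\mathbf{E} - \operatorname{diag}(n-1, \ldots, 0)$. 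The crucial point is that $\mathbf{A}$ is upper triangular with distinct diagonal entries $-(n-1), -(n-2), \ldots, 0$, so a constant similarity $w = \mathbf{P}\tilde{w}$ puts the system into the form $\tilde{w}'(t) = (t^{-1}\Lambda + \tilde{\mathbf{R}}(t))\tilde{w}(t)$ with $\Lambda$ diagonal.

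The pairwise differences of the diagonal entries of $t^{-1}\Lambda$ satisfy $\int_{t_0}^t (\lambda_i - \lambda_j)/s\,ds = (\lambda_i - \lambda_j)\log(t/t_0) \to \pm \infty$, so the Levinson dichotomy condition holds. A Levinson-type asymptotic integration theorem (a variant of Theorem~\ref{thm:hartman} adapted to time-dependent diagonal leading matrices) then gives solutions $\tilde{w}_i(t) \sim t^{\lambda_i + \delta_i} e_i$, where $\delta_i$ is determined by the diagonal average of $\tilde{\mathbf{R}}$. Unwinding the substitutions $z = \mathbf{S}\mathbf{P}\tilde{w}$ and $y = e^{\kappa t}z$, the dominant component of $y_i$ scales like $t^{n-1} \cdot t^{\lambda_i + \delta_i} e^{\kappa t}$, producing the asserted asymptotic $y_i(t) \sim t^{\alpha_i} e^{\kappa t}$ with $\alpha_i = (n-1) + \lambda_i + \delta_i$.

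The principal obstacle is verifying the integrability of $\tilde{\mathbf{R}}$ required by the Levinson-type step. The entries of $\mathbf{S}^{-1}(t)\mathbf{R}(t)\mathbf{S}(t)$ strictly below the diagonal are amplified by factors $t^{i-j}$, and under the bare hypothesis $\mathbf{R} \in L^2$ these entries need not remain in $L^2$: for instance $R_{ij}(t) = 1/t$ lies in $L^2(t_0,\infty)$ yet $t R_{ij}(t) = 1$ does not. The standard resolution, following Eastham, is to replace the single large shearing by an iterative sequence of $n-1$ smaller transformations, each of which peels off one eigendirection of the Jordan block and reduces the nilpotent rank by one; at each stage a Levinson-type theorem is applied to the newly isolated diagonal entry, and the residual perturbation is shown via the nilpotency $\mathbf{E}^n = 0$ to remain in the required function class at every iterate. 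Assembling the $n$ solutions so obtained across the $n-1$ stages then yields the theorem.
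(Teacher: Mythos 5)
The paper does not prove Theorem~\ref{thm:eastham}: it is imported verbatim from Eastham's book (Theorem 1.10.1) and used as a black box, so there is no in-paper proof to compare your attempt against.

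On its own merits, your sketch correctly identifies the standard line of attack. The reduction $y = e^{\kappa t}z$ is correct, the computation $\mathbf{S}^{-1}\mathbf{E}\mathbf{S} = t^{-1}\mathbf{E}$ and $\mathbf{S}^{-1}\mathbf{S}' = t^{-1}\operatorname{diag}(n-1,\ldots,0)$ is correct, and you rightly observe that the resulting leading matrix $\mathbf{A} = \mu\mathbf{E} - \operatorname{diag}(n-1,\ldots,0)$ has distinct eigenvalues so that the Levinson dichotomy condition is trivially satisfied with $(\lambda_i - \lambda_j)\log t \to \pm\infty$. Crucially, you also spot the genuine obstacle: the full shearing amplifies the subdiagonal entries of $\mathbf{R}$ by $t^{\,i-j}$, and an $L^2$ perturbation does not remain $L^2$ after such amplification, so the naive one-shot argument fails. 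This is exactly the issue that makes the theorem nontrivial.

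What remains, however, is only an assertion rather than a proof. You state that the resolution is an iterated sequence of $n-1$ smaller shearings that peel off one eigendirection of the Jordan block at a time, with a Levinson-type theorem applied at each stage, and you attribute the preservation of the admissible perturbation class to the nilpotency $\mathbf{E}^n = 0$. That last attribution is not the operative mechanism; nilpotency is what makes $\mathbf{E}$ a Jordan block in the first place, but the reason the iteration stays in an admissible class is the careful one-step bookkeeping of shearing exponents together with the fact that Levinson's theorem (and its variants used in Eastham's Chapter~1) tolerates integral smallness conditions strictly weaker than $L^2$, so the class can degrade mildly at each stage without breaking the argument. Without carrying out at least one iteration and exhibiting the class that is preserved, the sketch is a plausible outline of Eastham's proof rather than a proof.
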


We can now prove Lemma~\ref{lemma:asyptotic-perron}.
\begin{proof}[Proof of Lemma~\ref{lemma:asyptotic-perron}]
We will use the fact that the Perron-Frobenius eigenvalue of a matrix is real, simple and larger than any other eigenvalue in absolute value. We apply Theorem~\ref{thm:hartman} with $\mathbf{A} = \mathbf{M}_0$ and $\mathbf{B}(t) = \sum_{\ell = 1}^{n} M_\ell t^{-\ell}$. Since $\mathbf{B}(t)$ is at most of order $1/t$ for $t \geq 1$, all the conditions of Theorem~\ref{thm:hartman} are satisfied, and we obtain that Eq.~\eqref{eq:coppel2} has a solution of the form in Eq.~\eqref{eq:asymptotic-peron-result}.  

Next we show that there are no solutions which grow faster than this solution in the log-asymptotic sense. Transform the system Eq.~\eqref{eq:coppel2} such that $\mathbf{M}_0$ is in Jordan normal form. Then, each Jordan block defines a subsystem of equations such that the result of Theorem~\ref{thm:eastham} applies. Solutions to these subsystems grow log-asymptotically as fast as the eigenvalue associated to that Jordan block, the largest of which is the Perron-Frobenius eigenvalue of $\mathbf{M}_0$.
\end{proof}

\section{Open Problems}
\label{sec:conclusion}

Following the computation of the matrix $\mathbf{B}_1 \mathbf{E}$ in Section~\ref{sec:proof-S-properties}, the matrix $\mathbf{M}_0$ of Lemma~\ref{lemma:asyptotic-perron} can be computed exactly for small values of $m$ with the help of a computer. For example, with $m = 4$ we have
\begin{equation}
\mathbf{M}_0(m = 4) = 
\left(
\begin{array}{ccccccc}
 1 & 4 & 6 & 4 & 0 & 0 & 0 \\
 1 & 5 & 6 & 5 & 3 & 3 & 0 \\
 1 & 4 & 8 & 4 & 0 & 2 & 0 \\
 1 & 5 & 6 & 5 & 0 & 0 & 0 \\
 2 & 10 & 12 & 10 & 7 & 8 & 2 \\
 2 & 9 & 14 & 9 & 4 & 7 & 0 \\
 6 & 30 & 36 & 30 & 21 & 24 & 10 \\
\end{array}
\right).
\end{equation}
By computing the Perron-Frobenius eigenvalues of the $\mathbf{M}_0$ for each $m$, we can calculate the proportionality constants $c_m$ in $\log E[X_{n}^{m}] \sim c_m \sqrt{n}$ exactly, the results are shown in Figure~\ref{fig:momentsdata}.
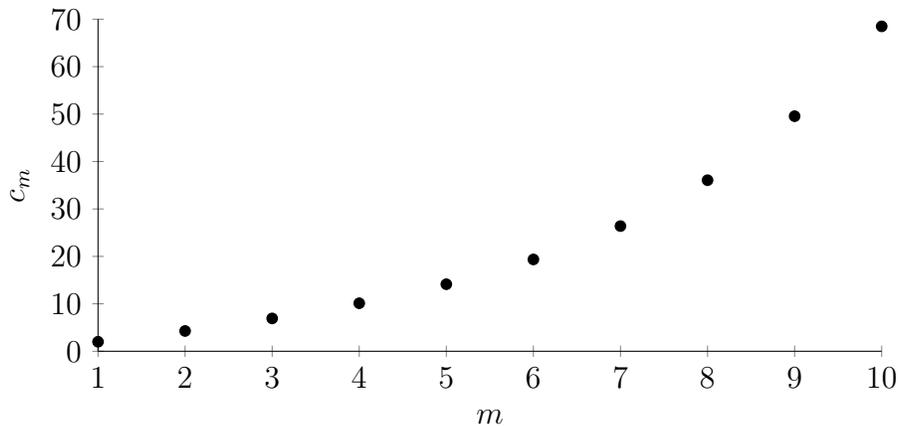
\begin{figure}[ht]
    \centering
\begin{tikzpicture}
\begin{axis}[
    height = 6cm,
    width = 12cm,
    xmin = 1,
    xmax = 10,
    ymin = 0,
    ymax = 70,
    axis lines = left,
    xlabel = $m$,
   ylabel = {$c_m$},
    xtick = {1, 2, 3, 4, 5, 6, 7, 8, 9, 10},
    ytick = {0, 10, 20, 30, 40, 50, 60, 70},
%    legend pos = south east,
    axis line style={-}
]

\addplot[scatter, only marks, scatter src = explicit symbolic, scatter/classes = {a = {mark = circle*, black}}]
table[col sep = comma]{data/momentsdata.csv};
\end{axis}
\end{tikzpicture}
\caption{The exact values of $c_m$ for which $\log E[X_n^m] \sim c_m \sqrt{n}$ for small $m$.}
\label{fig:momentsdata}
\end{figure}
Comparing the values of $c_m$ in Figure~\ref{fig:momentsdata} to the bounds provided in Theorem~\ref{thm:main}, it's clear that the bounds in Theorem~\ref{thm:main} can be tightened significantly. Based on numerical evidence, we propose the following conjecture.
\begin{conjecture}
    Theorem~\ref{thm:main} holds with the bounds on $c_m$ replaced by the stronger bounds $\log c_m \in [0.3 m, 0.4 m + 1]$.
\end{conjecture}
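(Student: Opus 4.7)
The plan is to sharpen the bounds on the Perron--Frobenius eigenvalue $\lambda_m$ of $\mathbf{M}_0 = \mathbf{B}_1\mathbf{E}$ dramatically, from the near--doubly exponential $\log \lambda_m = O(m\, e^{3\sqrt{m}})$ implicit in Lemma~\ref{lemma:S-properties} down to the linear growth $\log \lambda_m = \Theta(m)$ required by the conjecture. Since $c_m = 2\sqrt{\lambda_m}$, the target $\log c_m \in [0.3m,\ 0.4m+1]$ is equivalent to $\log \lambda_m \in [0.6m - O(1),\ 0.8m + O(1)]$. The main tool will be the Collatz--Wielandt characterization
\begin{equation}
\min_{i} \frac{(\mathbf{M}_0 v)_i}{v_i} \;\leq\; \lambda_m \;\leq\; \max_{i} \frac{(\mathbf{M}_0 v)_i}{v_i},
\end{equation}
valid for any strictly positive vector $v$, applied with carefully chosen product-form test vectors indexed by $(q;\parts{p})$.

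As a first technical step I would make the entries of $\mathbf{B}_1$ explicit by unwinding the inductive formula of Proposition~\ref{prop:triangular-inverse}. Each entry $(\mathbf{B}_1)_{i,j}$ is a sum over saturated chains $\parts{p}_i = \parts{q}^{(0)} \supset \parts{q}^{(1)} \supset \cdots \supset \parts{q}^{(\ell)} = \parts{p}_j$ in the multiset--inclusion order, weighted by products of the binomial factors arising from $\mathbf{C}_1$. Composing with the one--step map $\mathbf{E}$ then yields an explicit, if intricate, combinatorial expression for the entries of $\mathbf{M}_0$. The payoff of this bookkeeping is that the action of $\mathbf{M}_0$ on a product-form vector
\begin{equation}
v_{(q;\,\parts{p})} \;=\; \psi(q)\prod_{\ell=1}^{\length{p}}\varphi(p_\ell),
\end{equation}
parameterized by positive functions $\varphi,\psi:\mathbb{Z}_{\geq 1}\to\mathbb{R}_{>0}$, partially factorizes across parts, reducing the Collatz--Wielandt ratio to one-dimensional combinatorial sums in $q$ and $m$.

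For the lower bound $\log c_m \geq 0.3m$, the $\min$-side of Collatz--Wielandt with the above ansatz should suffice. A warm-up is to drop to the principal submatrix indexed by $\parts{p}=\emptyset$: then $\lambda_m$ already dominates the cyclic contribution $(m;\emptyset) \xrightarrow{\mathbf{E}} (\beta;[m-\beta]) \xrightarrow{\mathbf{B}_1} (m;\emptyset)$, whose weight is a sum of products of binomial coefficients $\binom{m}{\beta}$ over $1 \leq \beta \leq m-1$, easily bounded below by $2^{\Omega(m)}$. Optimizing $\varphi$ and $\psi$ should then push the rate up to the conjectured $0.6$.

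The main obstacle is the upper bound. The crude argument used in the proof of Lemma~\ref{lemma:S-properties} takes entrywise maxima and loses all cancellation, producing the near--doubly exponential bound. The goal is to find $\varphi,\psi$ so that $(\mathbf{M}_0 v)_{(q;\parts{p})}/v_{(q;\parts{p})}$ is bounded uniformly by $e^{0.8m + O(1)}$ across \emph{all} indices, not just the ``generic'' ones. A natural candidate is $\varphi(p) = \binom{m}{p}^{-1/2}$, $\psi(q) = \binom{m}{q}^{-1/2}$, designed to cancel the multinomial weights in $\mathbf{C}_1$ and $\mathbf{E}$; verifying the uniform bound requires sharp saddle--point estimates on sums of products of binomial coefficients indexed by partitions. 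If no single product-form vector works uniformly, the fallback is to stratify indices by $\length{p}$ and establish a recursion on this length parameter, showing inductively that each block contributes at most the conjectured exponential factor. In either approach, the delicate step is controlling the boundary indices --- those with extreme $q$ or extreme $\length{p}$ --- where the product form distorts; these may require separate combinatorial arguments independent of the Collatz--Wielandt machinery.
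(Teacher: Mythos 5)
This statement is one of the paper's open conjectures, not a theorem: the author offers no proof and explicitly remarks that bounding the Perron--Frobenius eigenvalue via row and column sums ``does not appear to be strong enough'' to reach these bounds. Your proposal is therefore not being measured against an existing argument; it is a research program, and as written it does not close the conjecture. The Collatz--Wielandt strategy with product-form test vectors is a reasonable line of attack, but every step that would actually produce the constants $0.3$ and $0.4$ is deferred: the optimization of $\varphi,\psi$ for the lower bound is not carried out, and the upper bound --- the genuinely hard direction, precisely because all the naive monotone bounds overshoot --- rests on an unverified candidate $\varphi(p)=\binom{m}{p}^{-1/2}$ plus an unexecuted fallback induction on $\length{p}$.

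Two concrete problems with the parts you do sketch. First, the ``principal submatrix indexed by $\parts{p}=\emptyset$'' consists of the single index $(m;\emptyset)$, since $\parts{p}\in\mathcal{P}(m-q)$ and $\parts{p}=\emptyset$ forces $q=m$; in the paper's computed examples ($m=3$ and $m=4$) the corresponding diagonal entry of $\mathbf{M}_0$ equals $1$, so this warm-up gives only $\lambda_m\geq 1$. The cycle $(m;\emptyset)\to(\beta;[m-\beta])\to(m;\emptyset)$ you invoke passes through indices with nonempty $\parts{p}$ and contributes to $(\mathbf{M}_0^2)_{(m;\emptyset),(m;\emptyset)}$, not to that principal submatrix. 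Second, even granting that correction, the resulting bound is roughly $\lambda_m^2\gtrsim\sum_\beta\binom{m}{\beta}\approx 2^m$, hence $\log\lambda_m\gtrsim\tfrac{m}{2}\log 2\approx 0.35\,m$ and $\log c_m\gtrsim 0.17\,m$, which falls short of the conjectured $0.3\,m$; so the ``optimization of $\varphi$ and $\psi$'' is not a refinement but the essential missing content even on the easier side. Until the uniform upper bound on $\max_i(\mathbf{M}_0 v)_i/v_i$ is actually established for some explicit $v$, the conjecture remains open.
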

Estimating the size of the Perron-Frobenius eigenvalue based on the bounds provided by row and column sums does not appear to be strong enough to obtain this. As observed in \cite{krasikov2004growing}, the generalization of the Ulam-Kac adder to Eq.~\eqref{eq:kac-adder-def-gamma} still admits first and second moments which grow log-asymptotically as $\sqrt{n}$. We therefore propose the following additional conjecture, which can likely be decided with only minor modifications to the arguments in this paper,
\begin{conjecture}
    Theorem~\ref{thm:main} holds for Eq.~\eqref{eq:kac-adder-def-gamma} with the bounds on $c_m$ replaced by $[f_1(m, \gamma), f_2(m, \gamma)]$ for suitable functions $f_1, f_2$.
\end{conjecture}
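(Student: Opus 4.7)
The plan is to trace each step of the proof of Theorem~\ref{thm:main} while carefully tracking how the parameter $\gamma$ propagates through the argument. Since Eq.~\eqref{eq:kac-adder-def-gamma} only multiplies the newly-sampled term by $\gamma$, the combinatorial structure (zero/nonzero pattern) of every matrix appearing in the proof should be preserved, with only the numerical entries rescaled by powers of $\gamma$. This strongly suggests that the whole machinery in Sections~\ref{sec:proof-outline}--\ref{sec:proofs} goes through with quantitative changes that can be tracked explicitly.

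First I would rederive Lemma~\ref{lemma:general-recur} starting from $X_{n+1}^m = \sum_{\beta=0}^{m}\binom{m}{\beta}\gamma^{m-\beta}X_n^\beta X_{U(n)}^{m-\beta}$. Carrying this through the conditioning argument of Section~\ref{sec:proof-lemma-general-recur} inserts a factor of $\gamma^{q-\beta}$ into each term of Eq.~\eqref{eq:general-recur-result-bar} and $\gamma^q$ into each term of Eq.~\eqref{eq:general-recur-result-hat}, while leaving the top line Eq.~\eqref{eq:general-recur-result-top} (the $\beta = q$ contribution) unchanged. Translating to generating functions yields $\mathbf{C}_1$ (unchanged) together with $\mathbf{C}_2(\gamma)$ and $\mathbf{C}_3(\gamma)$ whose nonzero entries are monomials in $\gamma$ of degree at most $m$. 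Because $\mathbf{C}_1$ is untouched, Propositions~\ref{prop:Q-lower-tri} and~\ref{prop:Q-properties} apply verbatim. Running the gauge transformation $g(z) = \mathbf{P}(z)h(z)$ and the substitution $u = 1/(1-z)$ exactly as in Section~\ref{sec:proof-S-properties} produces $\mathbf{S}(u;\gamma) = \sum_i u^{-i}\mathbf{M}_i(\gamma)$, where $\mathbf{M}_0(\gamma)$ has the same support as the original $\mathbf{M}_0$. Since primitivity depends only on support, $\mathbf{M}_0(\gamma)$ remains primitive for every $\gamma > 0$, so Lemma~\ref{lemma:asyptotic-perron} applies and delivers $\log E[X_n^m] \sim 2\sqrt{\lambda_m(\gamma)\, n}$, where $\lambda_m(\gamma)$ is the Perron-Frobenius eigenvalue of $\mathbf{M}_0(\gamma)$.

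It remains to extract explicit bounds $f_1, f_2$. For the lower bound I would first verify $\log E[X_n] \sim 2\sqrt{\gamma\, n}$ for the generalized adder; the Laguerre argument of Section~\ref{sec:m12} carries through with a routine rescaling (and agrees with \cite{krasikov2004growing}). Jensen's inequality then yields $f_1(m, \gamma) = 2m\sqrt{\gamma}$. For the upper bound I would bound $\lambda_m(\gamma)$ by the maximum column sum of $\mathbf{M}_0(\gamma)$; each entry of $\mathbf{C}_2, \mathbf{C}_3$ carries an extra factor of at most $\max(1, \gamma)^m$ relative to the $\gamma = 1$ case, leading to a bound of the form $f_2(m, \gamma) = (2\max(1,\gamma))^{(m/2 + 1)\, e^{3\sqrt{m}}}$. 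The principal obstacle is the case $\gamma \leq 0$: for $\gamma < 0$ the $X_n$ are no longer non-negative, and sign cancellations potentially break both the primitivity of $\mathbf{M}_0(\gamma)$ and the direct applicability of Perron-Frobenius theory. The cleanest statement is therefore for $\gamma > 0$; a fully general result would require passing to entry-wise absolute values (giving $|\lambda_m(\gamma)| \leq \lambda_m(|\gamma|)$ for upper bounds) and handling the lower bound by an analytic-continuation or variance argument which is not obviously trivial.
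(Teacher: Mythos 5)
This statement is one of the paper's open conjectures (Section~\ref{sec:conclusion}); the paper supplies no proof, only the remark that it ``can likely be decided with only minor modifications to the arguments in this paper.'' Your proposal is precisely that anticipated route, and the $\gamma$-bookkeeping is done correctly: the $\beta = q$ contribution to Lemma~\ref{lemma:general-recur} carries no factor of $\gamma$, so $\mathbf{C}_1$ (hence $\mathbf{Q}$, Propositions~\ref{prop:Q-lower-tri} and~\ref{prop:Q-properties}, and $\mathbf{B}_1$) is untouched, while $\mathbf{C}_2$ and $\mathbf{C}_3$ acquire monomials $\gamma^{q-\beta}$ and $\gamma^{q}$ that do not interact with the $(1-z)$-power counting used to extract $\mathbf{E}$; for $\gamma > 0$ the support and non-negativity of $\mathbf{M}_0(\gamma) = \mathbf{B}_1\mathbf{E}(\gamma)$ are preserved, so primitivity and Lemma~\ref{lemma:asyptotic-perron} apply. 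The lower bound is also right: the telescoped first-moment recurrence matches the Laguerre recurrence at $x = -\gamma$, giving $\mu_n = L_n(-\gamma)$ and $\log\mu_n \sim 2\sqrt{\gamma n}$, whence Jensen yields $f_1(m,\gamma) = 2m\sqrt{\gamma}$. I see no structural gap for $\gamma > 0$; what you have is a correct roadmap rather than a finished proof (the exact form of $f_2$ would need the same column-sum arithmetic the paper does for $\gamma = 1$, and that computation is itself loose), and your observation that $\gamma \leq 0$ breaks non-negativity --- and hence the Perron--Frobenius step --- is the right caveat to attach to any theorem extracted from this sketch.
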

Conjecture~\ref{conj:log-normal} as well as questions related to the first passage behavior of the Ulam-Kac adder remain completely open. Many history-dependent random sequences which have been studied recently share the general feature of defining recurrence relations which contain terms of the form $X_{U(n)}$. We considered the case where $U(n)$ is a discrete uniform distribution, but generalizations to non-uniform distributions such as those mentioned in \cite{krasikov2004growing} require further investigation.  

\section*{Acknowledgements}

The author would like to thank Benedek Valk{\'o} for providing several very helpful comments on an earlier draft of this paper as well as Yu-Qiu Zhao for providing a number of references on the asymptotic theory of difference equations.

\appendix 
\section{Supplementary Material}
\label{sec:proof-extra}

% \begin{lemma} \label{lemma:gen-func-radius}
% The generating functions defined in Eq.~\eqref{eq:most-general-generating} have radius of convergence equal to 1.
% \end{lemma}
% \begin{proof}
% We show the proof for the $m = 2$ case of Section~\ref{sec:m12}, the generalization follows by a similar argument. Using the standard telescoping procedure, we can write
% \begin{align}
%     sd & sd \\ 
%     sd & sd
% \end{align}

% For reference, we have the equations
% \begin{align}
%     \sigma_{n + 1} &= \sigma_{n} + \frac{2}{n + 1} \left(\sigma_n + \alpha_n \right) + \frac{1}{n + 1} \sum_{\ell = 0}^{n} \sigma_{\ell}. \label{eq:mu2-sig-AP} \\ 
%     \alpha_{n +1} &= \sigma_{n}  + \alpha_n + \frac{1}{n + 1} \sum_{\ell = 0}^{n}  \left(\sigma_\ell + 2 \alpha_\ell \right). \label{eq:mu2-alp-AP}
% \end{align} 
% We will show that for any $\epsilon > 0$, there is an $N$ such that $\sigma_{n} \leq \exp(\epsilon(n + 1))/\sqrt{n + 1}$ and $\alpha_n \leq \exp(\epsilon(n + 1))$ for all $n \geq N$. 

% Fix $\epsilon > 0$. The proof proceeds by induction on $n$. We will examine the right side of Eq.~\eqref{eq:mu2-sig-AP}, first noting that
% \begin{equation}
%     \frac{1}{n + 1} \sum_{\ell = 0}^{n} \sigma_{\ell}
% \end{equation}

% \end{proof}

\begin{proof}[Proof of Lemma~\ref{prop:triangular-inverse}]
The proof is by induction on $n$. For the base case $n = 1$, we have $X^{-1} = (1/c_{1, 1})$ and Eq.~\eqref{eq:triangular-inverse} reads $a_{1, 1} = 1/c_{1, 1}$. The induction step proceeds by Gaussian elimination of the $n^{\text{th}}$ row with the first $n - 1$ rows already reduced. We then have
\begin{equation}
    \left[
\begin{array}{ccccc|ccccc}
1 & 0 & 0 & \dots & 0 & a_{1, 1} & 0 & 0 & \dots & 0 \\
0 & 1 & 0 & \dots & 0 & a_{2, 1} & a_{2, 2} & 0 & \dots & 0 \\
0 & 0 & 1 & \dots & 0 & a_{3, 1} & a_{3, 2} & a_{3, 3} & \dots & 0 \\
\vdots & \vdots & \vdots & \ddots & \vdots & \vdots & \vdots & \vdots & \ddots & \vdots \\
c_{n, 1} & c_{n, 2} & c_{n, 3} & \dots & c_{n, n} & 0 & 0 & 0 & \dots & 1
\end{array}
\right]
\end{equation}
We see that to reduce $c_{n, j}$ for $1 \leq j \leq n - 1$, we multiply the $j^\text{th}$ row by $-c_{n, j}$ and add it to the $n^\text{th}$ row. This contributes a term $-c_{n, j} a_{j, k}$ to $a_{n, k}$ for each $1 \leq k \leq j$. Then, to reduce  $c_{n, n}$, we divide the $n^\text{th}$ row by $c_{n, n}$. Therefore,
\begin{equation}
    a_{n, j} = \frac{1}{c_{n, n}}\left(\delta_{n, j} - \sum_{k = 1}^{n - j} c_{n, n - k} a_{n - k, j}\right), \quad 1 \leq j \leq n,
\end{equation}
which completes the induction.

\end{proof}

\bibliographystyle{siam}
\bibliography{mybib.bib}

\end{document}